\newtheorem{theorem}{Theorem}[section]
\newtheorem{lemma}[theorem]{Lemma}
\newtheorem{proposition}[theorem]{Proposition}
\newtheorem{corollary}[theorem]{Corollary}
\numberwithin{equation}{section}
\theoremstyle{definition}
\newtheorem{definition}[theorem]{Definition}
\newtheorem{remark}[theorem]{Remark}
\def\Q{{\mathbb Q}}
\def\GL{{\mathrm{GL}}}			\def\R{{\mathbb R}}
\def\Z{{\mathbb Z}}				
\def\x{{\mathbf x}}				
\def\y{{\mathbf y}}             
\def\h{{\mathbf h}}             \def\f{{\mathbf f}}
\def\a{{\mathbf a}}				\def\e{{\mathbf e}}
\def\y{{\mathbf y}}				\def\N{{\mathbb N}}
\def\C{{\mathbb C}}             \def\q{{\mathbf q}}
\def\bw{{\mathbf w}}            \def\bc{{\mathbf c}}
\def\U{{\mathcal U}}            \def\0{{\mathbf 0}}
\def\A{{\mathcal A}}
\definecolor{cyan(process)}{rgb}{0.0, 0.72, 0.92}
\newcommand{\cov}{\operatorname{cov}}
\newcommand{\K}{\mathcal O_K}
\newcommand{\Kn}{K_S^{m+1}}
\newcommand{\be}{\mathbf{e}}
\newcommand{\vol}{\operatorname{Vol}}
\newcommand{\supp}{\operatorname{supp}}
\newcommand{\rk}{\operatorname{rank}}
\begin{document}

\title{\sc Singular Vectors on Manifolds over totally real Number Fields}
\begin{abstract}  
{\footnotesize  We extend the notion of singular vectors in the context of Diophantine approximation of real numbers with elements of a totally real number field $K$. 
For $m\geq1$, we establish a version of Dani's correspondence in number fields and prove that under a class of `friendly measures' in $K_S^m$, the set of singular vectors has measure zero. Here $S$ is the set of Archimedean valuations of $K$ and $K_S$ is the product of the completions of $\sigma(K)$, $\sigma\in S$. On the other hand, we show the existence of uncountably many non-trivial singular vectors on suitable submanifolds of $K^m_S$ under the action of a certain one parameter subgroup of $\mathrm{SL}_{m+1}(K_S)$.
}
\\
\smallskip

\noindent
\textit{2020 Mathematics Subject Classification:} 11J13, 37A17, 11J83.
\\
\noindent 
\textit{Keywords:} Diophantine approximation on number fields, singular vectors, divergent trajectories, uniform exponents.
\end{abstract}

\markboth {\hspace*{-9mm} 
\centerline{\footnotesize \sc SINGULAR VECTORS ON MANIFOLDS OVER NUMBER FIELDS} }
{ \centerline{\footnotesize \sc SHREYASI DATTA AND M.M.RADHIKA} \hspace*{-9mm}}

\author{\sc Shreyasi Datta\textsuperscript{*} and \sc M.M.Radhika\textsuperscript{**}}
\thanks{\textsuperscript{*}Department of Mathematics, University of Michigan, Ann Arbor, MI 48109-1043}
\email{\textsuperscript{*}\it dattash@umich.edu}
\thanks{\textsuperscript{**}School of mathematics, Tata Institute of Fundamental Research, Mumbai 400005, India}
\email{\textsuperscript{**}\it mmr@math.tifr.res.in}

\thispagestyle{empty}

\maketitle

\section{Introduction}

 The notion of a singular vector stems from the Dirichlet's theorem which is a fundamental result in the theory of Diophantine approximation. Let us denote the standard inner product in $\R^m$ by $\x\cdot\y$, where $\x,\y\in\R^m.$
 \begin{theorem}[Dirichlet theorem]
 For $\x\in\R^m$, the Dirichlet's theorem states that for any large positive integer $T$, there exist $p\in\Z,\ \mathbf{0}\neq\q\in\Z^m$ such that $\vert\q\cdot\x+p\vert<\frac{1}{T^m}$ with $\Vert\q\Vert\leq T.$
 \end{theorem}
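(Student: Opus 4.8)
The plan is to prove Dirichlet's theorem by Dirichlet's own box (pigeonhole) argument, which is where the name comes from and which gives the strict inequality as stated.

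First, fix $\x\in\R^m$ and a large positive integer $T$, and for a real number $t$ write $\{t\}\in[0,1)$ for its fractional part. Consider the $(T+1)^m$ integer vectors $\q\in\{0,1,\dots,T\}^m$ and the associated points $\{\q\cdot\x\}\in[0,1)$. Partition $[0,1)$ into the $T^m$ pairwise disjoint half-open intervals $I_k:=\bigl[(k-1)T^{-m},\,kT^{-m}\bigr)$ for $k=1,\dots,T^m$. Since $(T+1)^m>T^m$, the pigeonhole principle produces two distinct vectors $\q_1\neq\q_2$ in $\{0,\dots,T\}^m$ with $\{\q_1\cdot\x\}$ and $\{\q_2\cdot\x\}$ in the same $I_k$; as $I_k$ is half-open of length exactly $T^{-m}$, we get the \emph{strict} bound $\bigl|\{\q_1\cdot\x\}-\{\q_2\cdot\x\}\bigr|<T^{-m}$.

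Now set $\q:=\q_1-\q_2$ and $p:=\lfloor\q_2\cdot\x\rfloor-\lfloor\q_1\cdot\x\rfloor\in\Z$. Then $\0\neq\q$ because $\q_1\neq\q_2$; every coordinate of $\q$ lies in $\{-T,\dots,T\}$, so $\Vert\q\Vert\le T$ in the sup-norm (and hence in any fixed norm after the usual rescaling of constants); and $\q\cdot\x+p=\{\q_1\cdot\x\}-\{\q_2\cdot\x\}$, whence $|\q\cdot\x+p|<T^{-m}$, which is exactly the assertion.

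There is no real obstacle here; the only care points are enforcing strictness (handled by using half-open intervals of length exactly $T^{-m}$) and the choice of norm in the bound $\Vert\q\Vert\le T$. I would also note the more robust alternative via Minkowski's convex body theorem applied to the compact symmetric convex body $\{(\y,s)\in\R^{m}\times\R:\ \Vert\y\Vert_\infty\le T,\ |\y\cdot\x+s|\le T^{-m}\}$, whose volume is $2^{m+1}$; this yields the non-strict version $|\q\cdot\x+p|\le T^{-m}$, and it is this geometry-of-numbers formulation — reading integer solutions as lattice points — that generalizes to the $S$-arithmetic and totally real number field settings developed later in the paper, via lattices in $K_S^{m+1}$ and a suitable one-parameter diagonal subgroup of $\mathrm{SL}_{m+1}(K_S)$.
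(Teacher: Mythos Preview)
Your pigeonhole argument is correct and is the classical proof. Note, however, that the paper does not actually prove this statement: it is quoted in the introduction as a known result. What the paper \emph{does} prove is the $K_S^m$ analogue (Theorem~\ref{Dirichlet}) in Section~\ref{DTP}, and there the method is precisely the Minkowski convex-body route you sketch in your final paragraph: one forms a symmetric convex set in $K_S^{m+1}$, computes its volume as $2^{d(m+1)}(\sqrt{D_K})^{m+1}$ via $M_K$-divisors, and invokes Minkowski's theorem to produce a nonzero point of $\K^{m+1}$. So your primary argument differs from the paper's approach to Dirichlet-type statements, but your closing remark is exactly on target: the pigeonhole proof gives the sharp strict inequality over $\Z$ but relies on the order structure of the integers and does not transport well to $\K$, whereas the geometry-of-numbers argument is what carries over to the number-field setting (at the cost of the discriminant-dependent constant $c$ and a non-strict inequality).
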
 
 The above kind of approximation, often referred to as dual approximation, deals with the closeness of a vector $\x$ to the rational hyperplanes in $\R^m$. Improving Dirichlet's theorem, Khintchine, in 1926 (\cite{KHIN}), introduced the notion of \emph{singular} vectors.
 \begin{definition}[Singular vectors]
 A vector $\x\in\R^m$ is said to be \textit{singular} if for any $0<c$ and for sufficiently large $T$, we can always find nonzero integral points $p\in\Z,\ \mathbf{0}\neq\q\in\Z^m$ satisfying $$\vert\q\cdot\x+p\vert<\frac{c}{T^m},\, \Vert\q\Vert\leq T.$$\end{definition}
  Khintchine showed that a real number is singular if and only if it is rational. He also observed that singular vectors constitute a Lebesgue measure zero set in $\R^m$ (see \cite{Ca} Ch V, §7). 
  
  It is natural to consider Diophantine approximation with rationals coming from a fixed number field and this has been an active area of research recently. There has been extensive work on Diophantine approximation by Gaussian rationals (see \cite{Dani1, Dani2,Dani3, Mum}) but one of the first general results we are aware of are the papers \cite{W1} and \cite{S} of W. M. Schmidt. In addition to the aforementioned papers, analogues of Dirichlet's theorem were proved in \cite{Bu, Ro, S, S2, Hat}. In \cite{EGL}, Einsiedler, Ghosh and Lytle considered vectors which are \emph{badly approximable} by rationals from a number field, and this setting was further investigated in \cite{KL} and in \cite{AGGL}. Following these works, one might wonder about vectors at the opposite end of the spectrum, namely singular vectors with regard to approximation by rationals from a vector field. In this paper we initiate their study and prove the following main results:
  \begin{itemize}
  \item[1)] Paucity of singular vectors: the class of friendly measures introduced in \cite{KLW} gives zero measure to singular vectors in $K^m_S$, see Theorem \ref{fWF} and its corollary for the precise formulation.
\item[2)] Abundance of singular vectors: there are uncountably many totally irrational singular vectors on analytic submanifolds of $K^m_S$ not contained in hyperplanes, see Theorem \ref{manifold} for the precise formulation.
  \end{itemize}
  
  \subsection{Motivation of the work}
  Investigations pertaining to inheritance of Diophantine properties began with a conjecture of K. Mahler's in 1932 (cf. \cite{YBuBook}), which was settled by V. Sprind\v zuk in the 1960s (\cite{Sp2}). Diophantine approximation on manifolds, in particular, the study of singular vectors on manifolds, has been a major topic of research for many decades starting with work of Schmidt and Davenport \cite{DS}, followed by other works \cite{Ba1,Ba2, DRV2, YBu}.
  D. Kleinbock and B. Weiss, in \cite{KW} and \cite{KNW}, studied singular vectors with weights. In \cite{KW}, it was shown that under the class of \textit{friendly measures} introduced in \cite{KLW}, which includes the Lebesgue measure, the set of weighted singular vectors in $\R^n$ has measure zero. The proof therein relies on the quantitative nondivergence results from \cite {KM} and \cite{KLW}. In the study of approximations in $K_S^m$ by vectors from the number field $K$, the following questions arise naturally.
  \begin{itemize}
      \item[1.] What is the correct definition of the class of friendly measures in $K_S^m$?
      
      \item[2.] Does the set of singular vectors in $K_S^m$ have friendly measure zero?
  \end{itemize}

  Khintchine showed in \cite{KHIN} that, unlike in $\R$, there are singular vectors in $\R^2$ that do not belong to any rational affine hyperplane. A vector in $\R^m$ is called \textit{totally irrational} when it does not lie in any rational affine hyperplane. In the study of singular vectors on manifolds, it was shown in \cite{KNW} that there are uncountably many totally irrational singular vectors on a real analytic submanifold $M$ of $\R^m$ if $M$ is not contained inside a rational affine hyperplane. This observation motivates to address the following. 
   \begin{itemize}
   \item[3.] Do all singular vectors in $K_S$ lie in the diagonal embedding of $K$ in $K_S$?
   \item[4.] What is the analogous notion for a totally irrational vector in $K_S^m$?
   \item[5.] What can we say about the set of totally irrational singular vectors in $K_S^m$, or in an analytic submanifold $M=\prod_{\sigma\in S} M_{\sigma}$ of $K_S^m$? Is it uncountable?
   \end{itemize}

  Another reason to  guess an affirmative answer to question 3 above, comes from the relation of the singular vectors with divergent trajectories in the space of unimodular lattices. Maybe replace with: In \cite{Da}, S. G. Dani proves that, for a connected linear semisimple group $G$ and an irreducible non-uniform lattice $\Gamma\subset G$, in the homogeneous space $G/\Gamma$ of the unimodular lattices with $\Q$-rank 1, every divergent trajectory $g_tg\Gamma$ ($g\in G$), under the action by a one-parameter subgroup $\{g_t\}\subset G$, is degenerate divergent. This means, there exists a nonzero vector $v$ in the representation space $\R^n$ such that $g_tgv\to 0$ as $t\to\infty$. In $\R$, this implies that the only singular vectors are the rationals. It is thus natural to expect the same result for $SL_2(K_S)$ which is of $K$-rank $1$.
  
  Section \S\ref{DAN} comprises the definition of a singular vector in $K_S^m$. Here, we have also laid down the statements of the main results in this paper that addresses the question numbers 2, 3 and 5 above. Section \S \ref{DTP} deals with the Dirichlet's approximation theorem for $K_S^m$. Section \S \ref{sec2} cover the Mahler's compactness criterion for the homogeneous space $\mathrm{SL}_{m+1}(K_S)/\mathrm{SL}_{m+1}(\mathcal{O}_K)$, and the Dani correspondence of the divergent trajectories in this space with the singular vectors in $K_S^m$. In Section \S \ref{sec3} we introduce a general class of friendly measures in $K_S^m$ (see Subsection \ref{sec3.1}) and prove that the friendly measure of the set of singular vectors (ref. Def. \ref{DEFSING}) in $K_S^m$ is zero. This answers questions 1 and 2 above. In Section \S \ref{sec4} we provide the proofs for the remaining main theorems stated in Section \S \ref{DAN} that answers the question 3, 4 and 5 above.
  
 \subsection{A preliminary example of the results}
 We explain a simple case of the Theorem \ref{push_main} for the quadratic extension $K=\Q(\sqrt{2})$. The Galois embedding of $\Q(\sqrt 2)$ in $\R^2$ sends $\Q(\sqrt 2) \to\Q(\sqrt 2)\times \Q(\sqrt 2)$ via $$(a+b\sqrt 2)\mapsto (a+b\sqrt 2,a-b\sqrt 2).$$ By the weak approximation for number fields, the image of this embedding is a dense subset of $\R^2$, thus allowing us to consider approximations similar to the Diophantine approximation. We can approximate vectors in $\R^2$ by the elements $(a+b\sqrt 2,a-b\sqrt 2)\in \Q(\sqrt 2)\times \Q(\sqrt 2)$. We can also  extend this approximation to vectors in $\R^4$ (in general for $\R^{2n}$), where $\R^4$ is viewed as the completion of $\Q(\sqrt 2)^2\times \Q(\sqrt 2)^2$. A simple conclusion from Theorem \ref{push_main} is that, Lebesgue almost every vector $\x\in\R^4$ is not a singular vector under this approximation. More importantly, the theorem implies that $$ 
 \text{ for Lebesgue almost every  } \x\in\R^2 , (\x,\x^2)\in\R^4 \text{ is not singular.}$$  
 
 \noindent However, we show that in $\R^2$ all the singular vectors belong to the image of the embedding of $\Q(\sqrt2)$ in $\R^2$. With Theorem  \ref{manifold}, we see that the situation is not so trivial in higher dimensions, for instance, there are uncountably many totally irrational vectors of the form $(\x,\y,\x\y)\in\R^6$, which are singular.
 
  \subsection{Set-up and notation} Let $K$ be a totally real number field of degree $d$ over $\Q$ and $\K$ be the ring of integers in $K$. Let $S:=\{\sigma_1,\cdots,\sigma_d\}$ be the set of all normalized inequivalent archimedean valuations of $K$. For $\sigma\in S$, $K_\sigma$ will denote the completion of the embedding  $\sigma(K)$ in $\R$. By assumption,  $K_\sigma\cong\R$ for each $\sigma\in S$. Denote by $K_S$ the product space $\prod_{\sigma\in S}K_\sigma$. The diagonal embedding of $K\hookrightarrow K_S$ will be denoted by $\tau$ so that, $\tau(\alpha)=(\sigma_1(\alpha),\cdots,\sigma_d(\alpha))$ for $\alpha\in K$. In the sequel, we will identify $K$ with its image $\tau(K)$ in $K_S$ and drop writing $\tau$. For any positive integer $m$, define the inclusion map $$\boldsymbol\tau:=(\tau_1,\cdots,\tau_m):K^m\hookrightarrow K_S^m,$$ where $\tau_i=\tau:K\longrightarrow K_S$ is the diagonal embedding of $K$ into the $i$-th component of $K_S^m$ for $1\leq i\leq m$. As with $K$ in $K_S$, we identify $K^m$ with its image $\boldsymbol{\tau}(K^m)$ and drop writing $\boldsymbol{\tau}$. Apart from the above standard notation, we collect here a few conventions used in this article so as to avoid detailing repeatedly in the sequel.
\noindent
\begin{itemize}
    \item  Elements of $K_S$ and $K_\sigma$ will be denoted by all non bold alphabets, with the corresponding Galois embedding raised on the right side. So, $x^\sigma\in K_\sigma$ and $x=(x^\sigma)_{\sigma\in S}\in K_S$. 
    When $K_\sigma$ or $K_S$ is raised to the m-th power, the vectors will be denoted by bold alphabets with the corresponding Galois embedding written on the top right whenever the vector belongs to $K_\sigma^m$. So, $\x^\sigma=(x_1^\sigma,\cdots,x_m^\sigma)$ belongs to $K_\sigma^m$ and $\x=(x_1,\cdots,x_m)\in\prod_{i=1}^mK_S=K_S^m$. Under the isomorphism $K_S^m\simeq K_{\sigma_1}^m\times\cdots\times K_{\sigma_d}^m$, we will denote $\x=(\x^{\sigma})_{\sigma\in S}$. We will be using either notation depending on the context. An element in a general metric space $X$ will be denoted by a non bold alphabet.
    \item The norm in $K_S$ will be the sup norm given by: For $x=(x^\sigma)_{\sigma\in S}\in K_S$ $$\Vert x\Vert\colonequals\underset{\sigma\in S}\max\vert x^\sigma\vert.$$
    The sup norm in $K_S$ is extended to $K_S^m$ as follows: For $\x=(x_1,\cdots,x_m)\in K_S^m$ 
    $$\Vert\x\Vert\colonequals\underset{1\leq j\leq m}\max\Vert x_j\Vert.$$
    The same notation $\Vert\cdot\Vert$ is retained to represent the sup norm in $K_S^m$ as well. However, boldness of the alphabets within the norm will help distinguish them in the context.
    \item 
   Given a polynomial $p=\sum a_iX_i\in K[x_1,\cdots,x_m]$, where each $X_i$ is a monomial in $x_1,\cdots,x_m$, we denote by $p^\sigma$ the polynomial $p^\sigma=\sum a_i^\sigma X_i\in \sigma(K)[x_1,\cdots,x_m]$, for each $\sigma\in S$. We call a set $\mathcal{W}$ to be a $k$ dimensional \textit{affine $K$-subspace of $K_S^m$} if it is of the form $\mathcal{W}=\prod_{\sigma\in S} W_\sigma$ where, for each $\sigma$, $W_\sigma$ is the affine subspace in $K_\sigma^m$ determined by a collection of affine polynomials  $p_1^\sigma, \cdots, p_{m-k}^\sigma\in \sigma(K)[x_1,\cdots,x_m]$ such that, the collection $p_1,\cdots, p_{m-k}\in K[x_1,\cdots,x_m]$ determines a $k$ dimensional affine $K$-subspace $W$ in $K^m$. We will call a $m-1$ dimensional affine $K$-subspace of $K_S^m$, an \textit{affine $K$-hyperplane in $K_S^m$}.
   \item The terminology - \textit{submanifold of $K_S^m$} - will refer to a subset $M$ of $K_S^m$ which is a product of submanifolds $M_\sigma$ of $K_\sigma^m,\ \sigma\in S$. As $K_\sigma\simeq\R$ for each $\sigma$, we can talk about a real analytic manifold $M_\sigma$ of $K_\sigma^m$. When $M_\sigma$ is a real analytic manifold for all $\sigma\in S$, we call $M$ a \textit{real analytic submanifold of $K_S^m$}.
  
\end{itemize}

\section{Main theorems}
  \label{DAN}
  One of the fundamental results in the theory of Diophantine approximation is the Dirichlet's approximation theorem. A proof of the Dirichlet's theorem for approximation in $K_S$ can be found in \cite{Bu}, \cite{Ro}, \cite{S}, \cite{S2} and \cite{Hat}. 
 We prove the Dirichlet's approximation theorem for $K_S^m$ in  Section \S \ref{DTP}. 
 \begin{theorem}[Dirichlet's theorem]
 \label{Dirichlet}
   Suppose $K$ is a number field of degree $d$ and $c>0$, depends only on the number field $K$ . Given a vector $\x =(x_1,\cdots,x_m)\in K_S^m$, for every $Q>>0$ there are integral vectors $q_0\in\K$ and $\q=(q_1,\cdots,q_m)\in\K^m$, not all zero, and such that $\Vert\mathbf \q\cdot\x+q_0\Vert<\frac{c}{Q^m}, \  \Vert\q\Vert\leq Q$. 
   \end{theorem}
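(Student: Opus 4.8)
\medskip\noindent
The plan is to obtain Theorem~\ref{Dirichlet} from Minkowski's convex body theorem, exactly as in the classical case over $\Z\subset\R$, the only change being that the role of $\Z$ is now played by $\K$ embedded diagonally in $K_S$. The conceptual point is that the correct space of ``integral vectors'' here is the lattice $\K^{m+1}$ sitting inside $K_S^{m+1}\cong\R^{d(m+1)}$: since $K$ is totally real, $\tau(\K)$ is a full-rank lattice in $K_S\cong\R^{d}$ whose covolume equals $\sqrt{|D_K|}$, where $D_K$ is the discriminant of $K$, and consequently $\K^{m+1}$ embeds as a lattice $\Lambda\subset K_S^{m+1}$ of covolume $|D_K|^{(m+1)/2}$.

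Fix $\x=(x_1,\dots,x_m)\in K_S^{m}$ and $Q>0$, and let $c$ be any real number with $c>2\,|D_K|^{(m+1)/(2d)}$; note this is a constant depending only on $K$ and on the fixed $m$, not on $\x$ or $Q$. Consider
\[
E_Q:=\Bigl\{(q_1,\dots,q_m,q_0)\in K_S^{m+1}\ :\ \|q_j\|\le Q\ \ (1\le j\le m),\ \ \|q_1x_1+\dots+q_mx_m+q_0\|\le\tfrac{c}{2Q^{m}}\Bigr\}.
\]
On each archimedean factor $K_\sigma^{m+1}\cong\R^{m+1}$ the linear map $(t_1,\dots,t_m,t_0)\mapsto(t_1,\dots,t_m,\;t_1x_1^\sigma+\dots+t_mx_m^\sigma+t_0)$ is unipotent of determinant $1$, so $E_Q$ is the preimage of the box $\prod_{\sigma\in S}\bigl([-Q,Q]^{m}\times[-\tfrac{c}{2Q^m},\tfrac{c}{2Q^m}]\bigr)$ under a volume-preserving linear map of $K_S^{m+1}$; in particular $E_Q$ is compact, convex, symmetric about $\mathbf 0$, and a change of variables yields
\[
\vol(E_Q)=(2Q)^{md}\Bigl(\tfrac{c}{Q^m}\Bigr)^{d}=2^{md}c^{d}>2^{d(m+1)}|D_K|^{(m+1)/2}=2^{d(m+1)}\operatorname{covol}(\Lambda).
\]
By Minkowski's convex body theorem, $E_Q$ contains a nonzero point of $\Lambda$; that is, there exist $q_0\in\K$ and $\q=(q_1,\dots,q_m)\in\K^{m}$, not all zero, with $\|q_j\|\le Q$ for every $j$ and $\|\q\cdot\x+q_0\|\le\tfrac{c}{2Q^m}<\tfrac{c}{Q^m}$, which is precisely the assertion of Theorem~\ref{Dirichlet}.

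I do not anticipate a genuine obstacle; the argument is a routine application of the geometry of numbers once the right lattice is identified, and the only points requiring care are the bookkeeping of $\operatorname{covol}(\Lambda)$ and of $\vol(E_Q)$, and the passage from the non-strict conclusion of Minkowski's theorem to the strict inequality demanded in the statement (which is why the factor $2$ was built into the bound defining $E_Q$). For completeness we note that, although the theorem only requires $(q_0,\q)\neq\mathbf 0$, once $Q$ is large enough that $c/Q^m<1$ one automatically has $\q\neq\mathbf 0$: if $\q=\mathbf 0$ then $q_0\neq 0$, yet every nonzero $q_0\in\K$ satisfies $\|q_0\|^{d}\ge\prod_{\sigma\in S}|\sigma(q_0)|=\bigl|N_{K/\Q}(q_0)\bigr|\ge 1$, hence $\|q_0\|\ge 1$, contradicting $\|q_0\|\le c/Q^m<1$.
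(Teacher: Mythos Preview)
Your proof is correct and follows essentially the same route as the paper: both arguments apply Minkowski's convex body theorem to the lattice $\K^{m+1}\subset K_S^{m+1}\cong\R^{d(m+1)}$ and the convex symmetric body obtained by pulling back a product box through the unipotent shear $u_{\x}$. The paper phrases the volume computation in the language of $M_K$-divisors while you compute directly, and the paper scales the sides by $(D_K)^{1/(2d)}$ to hit the Minkowski bound exactly rather than building slack into $c$, but these are cosmetic differences; your addendum that $\q\neq\mathbf 0$ once $Q$ is large (via $\|q_0\|\ge 1$ for nonzero $q_0\in\K$) is a useful observation not made explicit in the paper.
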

   
  From the above theorem we coin the definition of a singular vector in $K_S^m$ likewise. 
 

 \begin{definition}[Singular vectors]
 \label{DEFSING}
 A vector $\x=(x_1,\cdots,x_m)\in K_S^m$ is said to be singular if for every $c>0$, for all sufficiently large $Q>0$ there exists $\mathbf{0}\neq\q\in \K^m$, $q_0\in\K$ satisfying the following system \begin{equation}\label{sing}
 \begin{aligned}&\Vert \q\cdot\x+q_0\Vert<\frac{c}{Q^m},\\&\Vert \q\Vert\leq Q.\end{aligned}
 \end{equation}
 
\end{definition}

 \noindent
 The set of singular vectors in $K_S^m$ will be denoted $Sing_S^m$. 
 
 One of the main theorems in this paper, based on the notions of nondegeneracy in $K_S^m$ and a more general class of friendly measures on $K_S^m$ introduced in Section \S\ref{sec3}, is the following. This theorem is proved in Section \S\ref{sec3}.
 
 \begin{theorem}\label{push_main}
 \label{fWF}
   Suppose $X=\prod_{\sigma\in S} X_\sigma$ is a Besicovitch space and $\mu=\prod_{\sigma\in S}\mu_\sigma$ be a Federer measure on $X$ and let $\f:X\to K_S^m, \f(x)=(\f_\sigma(x^\sigma))_{\sigma\in S}$ be a continuous map such that $(\f,\mu)$ is nonplanar for $\mu$-almost every point of $X$ and for each $\sigma\in S$, $(\f_\sigma,\mu_\sigma)$ is good for $\mu_\sigma$-almost every point of $X_\sigma$. Then $\f_\star\mu(Sing_S^m)=0$, where $\f_\star\mu$ is the pushforward measure.
   \end{theorem}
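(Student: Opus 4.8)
The plan is to follow the Dani correspondence strategy of \cite{KW}, transported to the $S$-arithmetic setting via the material of Section \S\ref{sec2}. First I would translate membership in $Sing_S^m$ into a statement about divergent trajectories in $\mathrm{SL}_{m+1}(K_S)/\mathrm{SL}_{m+1}(\mathcal O_K)$: by the Dani correspondence established earlier, $\x\in Sing_S^m$ if and only if the trajectory $\{a_t u_{\x} \mathrm{SL}_{m+1}(\mathcal O_K)\}_{t\ge 0}$ diverges to infinity in the homogeneous space, where $a_t$ is the appropriate one-parameter diagonal subgroup of $\mathrm{SL}_{m+1}(K_S)$ (acting by the same weight in each Archimedean place) and $u_{\x}$ is the unipotent matrix built from $\x$. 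By Mahler's compactness criterion for this space (also recalled in \S\ref{sec2}), divergence is equivalent to: for every $\varepsilon>0$ there is $T$ such that for all $t\ge T$ the lattice $a_t u_{\x}\mathcal O_K^{m+1}$ contains a nonzero vector of norm $<\varepsilon$. So it suffices to show that for $\f_\star\mu$-almost every $\x$, this fails, i.e. the trajectory does not diverge.

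The key analytic input is a quantitative nondivergence estimate for the one-parameter flow acting on the curves (or measures) $t\mapsto a_t u_{\f(x)}\mathcal O_K^{m+1}$. I would establish the $(C,\alpha)$-good property of the relevant coordinate functions: for each $\sigma\in S$, the hypothesis that $(\f_\sigma,\mu_\sigma)$ is good for $\mu_\sigma$-almost every point guarantees that the functions $t\mapsto \|a_t u_{\f_\sigma(x^\sigma)} w\|_\sigma$, for $w$ ranging over primitive vectors (or, more precisely, the coefficients of the exterior powers indexed by subsets $I\subset\{0,1,\dots,m\}$), are $(C,\alpha)$-good on suitable intervals of the time parameter with uniform constants; meanwhile nonplanarity of $(\f,\mu)$ ensures the nondegeneracy condition needed so that these functions are not uniformly small. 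Combining these place by place — using that $X=\prod_\sigma X_\sigma$ is Besicovitch and $\mu=\prod_\sigma\mu_\sigma$ is Federer — one gets, via the Kleinbock--Margulis type nondivergence theorem adapted to $\mathrm{SL}_{m+1}(K_S)$ (the $S$-arithmetic quantitative nondivergence, as in the references \cite{KM}, \cite{KLW} cited in the excerpt), a bound of the form
\begin{equation*}
\mu\bigl(\{x\in B: \inf_{t\in[T,2T]} \delta(a_t u_{\f(x)}\mathcal O_K^{m+1}) < \varepsilon\}\bigr) \le C'\,\varepsilon^{\alpha'}\,\mu(B)
\end{equation*}
for all balls $B$ in a full-measure subset, with $C',\alpha'$ independent of $T$ and $\varepsilon$, where $\delta(\Lambda)$ denotes the length of a shortest nonzero vector of the lattice $\Lambda$.

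From this uniform estimate the conclusion follows by a standard Borel--Cantelli / intersection-over-$\varepsilon$ argument: for fixed $\varepsilon$, summing (or rather taking the intersection over a sequence $T_n\to\infty$ along which the estimate is uniform) shows the set of $x$ whose trajectory eventually stays in the $\varepsilon$-thin part has measure at most $C'\varepsilon^{\alpha'}$ times the measure of any ball; letting $\varepsilon\to 0$ kills it, and then a countable union over a basis of balls covering $X$ (using the Besicovitch property to pass from a local to a global statement) gives that the set of $x$ with divergent trajectory is $\mu$-null. Pushing forward via $\f$ yields $\f_\star\mu(Sing_S^m)=0$.

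I expect the main obstacle to be the first analytic step: verifying that the functions arising in the $S$-arithmetic setting — the norms of the vectors $a_t u_{\f(x)}e_I$ in each exterior power $\bigwedge^k K_S^{m+1}$ — are $(C,\alpha)$-good in the time variable with constants uniform over the primitive vectors, and that nonplanarity of $(\f,\mu)$ translates correctly into the nondegeneracy hypothesis of the nondivergence theorem across all places simultaneously. This requires carefully unwinding how the diagonal $a_t$-action interacts with the product structure $K_S=\prod_\sigma K_\sigma$ and with the unipotent coordinates coming from $\f$; the rest is then a faithful adaptation of the argument in \cite{KW}, with the Federer/Besicovitch hypotheses playing the role that the analogous hypotheses on $\R^n$ play there.
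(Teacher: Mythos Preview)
Your overall skeleton matches the paper's: translate via the Dani correspondence (Theorem~\ref{Dani}) to divergent trajectories, invoke an $S$-arithmetic quantitative nondivergence theorem, and finish with a limiting argument over $\varepsilon$. However, there is a genuine confusion at the key analytic step. The hypothesis that $(\f_\sigma,\mu_\sigma)$ is good is a statement about the \emph{spatial} variable: it says that linear combinations of $1,f_{1,\sigma},\dots,f_{m,\sigma}$ are $(C,\alpha)$-good as functions of $x^\sigma$ with respect to $\mu_\sigma$. It neither asserts nor is used to establish goodness of $t\mapsto\|a_t u_{\f_\sigma(x^\sigma)}w\|_\sigma$ in the time parameter; for fixed $x$ and $w$ that function is just a maximum of exponentials in $t$ and is automatically good regardless of $\f$. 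More to the point, goodness in $t$ alone cannot produce a bound on the $\mu$-measure of bad $x$'s as in your displayed inequality: to control $\mu$ you need goodness in $x$ with respect to $\mu$.

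What the paper actually does is fix a time $t$ and apply quantitative nondivergence (Theorem~\ref{QND2}) to the map $h:\tilde B\to\GL(m+1,K_S)$, $h(x)=g_t u_{\f(x)}$. The two hypotheses of that theorem are verified as follows: (i) $x\mapsto\cov(g_t u_{\f(x)}\Delta)$ is $(C,\alpha)$-good on $\tilde B$ with respect to $\mu$, which, after computing the exterior-power coordinates of $g_t u_{\x}\bw$ explicitly, reduces to goodness of $(\f_\sigma,\mu_\sigma)$ at each place combined via Lemma~2.2 of \cite{KT}; and (ii) $\sup_{x\in B\cap\supp\mu}\cov(g_t u_{\f(x)}\Delta)\ge\rho$ for a fixed $\rho>0$ independent of $t$ and $\Delta$, which is precisely where nonplanarity of $(\f,\mu)$ enters: it forces $\sup_{x^\sigma}\|\tilde\f_\sigma(x^\sigma)\cdot\bc(\bw)\|\ge\|\bc(\bw)\|$ at each place, yielding the uniform covolume lower bound in \eqref{covolume}. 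This gives $\mu(\{x\in B:\delta(g_{t_i}u_{\f(x)}\mathcal O_K^{m+1})<\varepsilon\rho\})\le E\,\varepsilon^\alpha$ along a sequence $t_i\to\infty$, with $E$ independent of $i$, and Proposition~\ref{prop1} converts this into $\mu(\{x\in B:\f(x)\in Sing_S^m\})=0$. So keep your skeleton, but apply goodness and quantitative nondivergence in the variable $x$ at fixed times, not in the time variable over intervals.
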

   \noindent
 The proof relies on the connection of singular vectors in $K_S^m$ and divergent orbits of the left regular action of certain one parameter subgroups og $G$ in the space  $G/\Gamma$, where $G=\mathrm{SL}_{m+1}(K_S)$, $\Gamma=\mathrm{SL}_{m+1}(\K)$. The above-mentioned connection was first observed by Dani in \cite{Da} for $\mathrm{SL_n(\R)/SL_n(\Z)}$. Next, the main tool of this proof is quantitative nondivergence from \cite{KT}. 
 
 In \cite{KLW} several nontrivial examples of measures $\mu$ and maps $\f$ satisfying the conditions of the above theorem can be found, in particular a class of measures called \textit{friendly} was introduced. Volume measures, fractal measures, pushforward of the previously mentioned two measures by nonsingular nondegenerate maps provide examples to friendly measures. As a consequence of the previous theorem we have the following corollary.
 \begin{corollary}\label{WF}
   Suppose $\mu$ is a friendly measure in $K_S^m$, then $\mu(Sing_S^m)=0$.
   \end{corollary}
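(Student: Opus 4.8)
The plan is to deduce this statement directly from Theorem \ref{fWF} by exhibiting a friendly measure on $K_S^m$ as a pushforward measure of the type to which the theorem applies. First I would take $X = K_S^m$ itself, with $X_\sigma = K_\sigma^m \cong \R^m$ for each $\sigma \in S$, and let $\f$ be the identity map, so that $\f_\star\mu = \mu$ and the conclusion $\mu(Sing_S^m)=0$ is exactly $\f_\star\mu(Sing_S^m)=0$. It then remains to verify that a friendly measure $\mu$ on $K_S^m$, in the sense introduced in Section \S\ref{sec3}, factors as a product $\mu = \prod_{\sigma\in S}\mu_\sigma$ with each $\mu_\sigma$ a Federer (doubling) measure on a Besicovitch space $X_\sigma$, and that the hypotheses "$(\f,\mu)$ nonplanar $\mu$-a.e." and "$(\f_\sigma,\mu_\sigma)$ good $\mu_\sigma$-a.e." hold automatically for the identity map. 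The Federer/Besicovitch requirements are part of the definition of friendly on each factor (inherited from \cite{KLW}), and for the identity map goodness reduces to the decay condition on affine functions that is built into friendliness, while nonplanarity of the identity reduces to the statement that $\mu$ gives zero mass to every affine $K$-hyperplane of $K_S^m$ — again part of the definition.

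Concretely, the steps in order are: (1) recall the definition of friendly measure on $K_S^m$ from \S\ref{sec3}, noting that it is formulated as a product of friendly measures $\mu_\sigma$ on the factors $K_\sigma^m$; (2) observe that friendliness of $\mu_\sigma$ on $K_\sigma^m\cong\R^m$ implies $\mu_\sigma$ is Federer and its support is a Besicovitch space, so the hypotheses of Theorem \ref{fWF} on $(X,\mu)$ are met with $X=\supp\mu$; (3) check that with $\f=\Id$, the pair $(\f_\sigma,\mu_\sigma)$ is $(C,\alpha)$-good on a neighborhood of $\mu_\sigma$-almost every point because the coordinate functions and their linear combinations are precisely the affine functions for which friendliness guarantees the good property; (4) check that $(\f,\mu)$ is nonplanar almost everywhere, i.e. for $\mu$-a.e. $x$ every neighborhood of $x$ is not pushed into an affine $K$-hyperplane, which follows since a friendly measure assigns zero measure to each such hyperplane and there are countably many rational constraints to rule out locally; (5) invoke Theorem \ref{fWF} to conclude $\mu(Sing_S^m)=\f_\star\mu(Sing_S^m)=0$.

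I do not expect a serious obstacle here: this corollary is essentially a matter of unwinding definitions and matching the product structure of friendly measures on $K_S^m$ with the product hypotheses of Theorem \ref{fWF}. The only mildly delicate point is step (4): one must be careful that "nonplanar" in the $K_S^m$ setting is about \emph{affine $K$-hyperplanes} (defined via polynomials with coefficients in $\sigma(K)$ that descend to a single $K$-hyperplane), not arbitrary real hyperplanes, so that the relevant family is countable and the null-set argument over all such hyperplanes goes through. Once that is in place, the identity map trivially satisfies every remaining hypothesis and the result is immediate.
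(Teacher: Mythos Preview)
Your approach is essentially the paper's: take $X=K_S^m$, $\f=\Id$, and check that ``friendly'' unpacks into exactly the hypotheses of Theorem~\ref{fWF}. The paper's proof is a one-liner stating that $\mu$ is friendly if and only if each $\mu_\sigma$ is Federer, $(\Id,\mu)$ is nonplanar, and $(\Id,\mu_\sigma)$ is good $\mu_\sigma$-a.e.

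However, your step~(4) contains a misreading of the definition. Nonplanarity of $(\f,\mu)$ at $x_0$ (see \S\ref{sec3.1}) requires that for every product neighborhood $B=\prod_\sigma B_\sigma$ and every $\sigma$, the set $B_\sigma\cap\supp\mu_\sigma$ is not contained in \emph{any} real affine hyperplane of $K_\sigma^m\cong\R^m$ --- not merely in $K$-rational ones. So the family of hyperplanes to exclude is uncountable, and your ``countably many rational constraints'' argument is both incorrect and unnecessary. The right justification is simpler: the measure-theoretic nonplanarity built into the definition of friendly says $\mu_\sigma(\mathcal L_\sigma)=0$ for every real hyperplane $\mathcal L_\sigma$; hence any ball $B_\sigma$ with $\mu_\sigma(B_\sigma)>0$ cannot have $B_\sigma\cap\supp\mu_\sigma\subset\mathcal L_\sigma$, and $(\Id,\mu)$ is nonplanar at every point of $\supp\mu$. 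The paper records this equivalence explicitly (``$\mu$ is nonplanar if and only if $(\Id,\mu)$ is nonplanar''). With that correction your plan goes through verbatim.
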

 Let us mention a less technical corollary of Theorem \ref{fWF}.
   \begin{corollary}
   Suppose $\f=(\f_\sigma):\mathbf{U}=\prod_{\sigma\in S}U_\sigma\to K_S^m$ be a continuous map, where $U_\sigma\subset K_\sigma^{d_\sigma}$ be an open set. Let $\f$ be a nondegenerate map. Then $\f_\star\lambda(\mathbf{U}\cap Sing_S^m)=0$, where $\lambda$ is the Lebesgue measure.
   \end{corollary}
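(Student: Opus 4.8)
The plan is to obtain this as an immediate consequence of Theorem \ref{fWF}, applied with $X = \mathbf{U}$ and $\mu = \lambda$; the only work is to verify the three hypotheses of that theorem.

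First, since $K_\sigma \cong \R$ for every $\sigma \in S$, each $U_\sigma$ is an open subset of a Euclidean space $\R^{d_\sigma}$. Euclidean space has the Besicovitch covering property and Lebesgue measure is doubling, hence Federer; so $X = \prod_{\sigma \in S} U_\sigma$ is a Besicovitch space and $\lambda = \prod_{\sigma \in S} \lambda_\sigma$ is a Federer measure on it, where $\lambda_\sigma$ denotes the restriction of Lebesgue measure to $U_\sigma$.

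Next I would unwind the nondegeneracy hypothesis. By definition, $\f$ being nondegenerate means that each component map $\f_\sigma : U_\sigma \to K_\sigma^m$ is nondegenerate in the usual (real-analytic) sense, i.e.\ at $\lambda_\sigma$-almost every point its successive partial derivatives span $K_\sigma^m$. By the generalization to the present setting of the Kleinbock--Margulis estimates on $(C,\alpha)$-good functions (cf.\ \cite{KM}, \cite{KLW}), nondegeneracy of $\f_\sigma$ implies that $(\f_\sigma, \lambda_\sigma)$ is good for $\lambda_\sigma$-almost every point of $U_\sigma$. The same condition also guarantees that near a generic point the image of $\f$ is not contained in any proper affine $K$-subspace of $K_S^m$, which is exactly the statement that $(\f, \lambda)$ is nonplanar for $\lambda$-almost every point of $X$.

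Having checked all three hypotheses, Theorem \ref{fWF} gives $\f_\star \lambda(Sing_S^m) = 0$, equivalently $\lambda\big(\f^{-1}(Sing_S^m) \cap \mathbf{U}\big) = 0$, which is the claim. I do not expect any real obstacle: the one point needing care is that ``nondegenerate'' is to be read componentwise, and that the implications ``nondegenerate $\Rightarrow$ good a.e.'' and ``nondegenerate $\Rightarrow$ nonplanar a.e.'' are in force, but both follow from the good-function machinery once it is established in the number-field context.
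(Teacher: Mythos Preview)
Your proposal is correct and follows exactly the route the paper intends: the corollary is stated immediately after Theorem~\ref{fWF} with no separate proof, and the surrounding text (the recalled proposition from \cite{KM} that nondegeneracy implies $(C,\alpha)$-goodness, together with the standard facts that Euclidean space is Besicovitch and Lebesgue measure is Federer) supplies precisely the verifications you carry out. One small terminological point: nonplanarity in the paper's sense requires that each $\f_\sigma(B_\sigma\cap\supp\mu_\sigma)$ avoid \emph{every} proper affine hyperplane of $K_\sigma^m\cong\R^m$, not merely affine $K$-hyperplanes, but nondegeneracy of $\f_\sigma$ gives this stronger statement directly, so your argument goes through unchanged.
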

 
 Now instead of improving the constant in the Dirichlet's theorem, one can try to improve the power of $Q$ on the right hand side of the first inequality in the Dirichlet's theorem. This leads to the definition of the \textit{uniform exponent} $\hat\omega(\x)$ of a vector $\x\in K_S^m$. This exponent was first introduced in \cite{BL} by Y. Bugeaud and M. Laurent, in 2005. It is defined to be the supremum of $v$ such that the following system will have a nonzero solution in $\K^{m+1}$ for all large enough $Q>0$,
 \begin{equation}
 \begin{aligned}
     &\Vert \q\cdot\x+q_0\Vert<\frac{1}{Q^v}\\
     &\Vert \q\Vert \leq Q.
     \end{aligned}
 \end{equation}
It is immediate from the Dirichlet's theorem that $\hat\omega(\x)\geq m$ for every $\x\in K_S^m$. Also, it is easy to see that $\hat\omega(\x)>m$ implies $\x\in K_S^m$ is singular.

 This following theorem, proved in Section \S \ref{sec4}, guarantees that singular vectors in $K_S$ come trivially only. . 
 \begin{theorem}
 \label{simple}
  $x\in K_S$ is singular if and only if it belongs to $K$.
  \end{theorem}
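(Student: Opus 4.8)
The plan is to prove the two implications separately, using the Dani correspondence of Section \S\ref{sec2} for the nontrivial direction.

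The forward implication is elementary, so I would dispose of it first. If $x=\tau(\alpha)$ with $\alpha=a/b$, $a,b\in\K$, $b\neq 0$, then taking $q=b$ and $q_0=-a$ gives $qx+q_0=\tau(b\alpha-a)=0$, so $\|qx+q_0\|=0<c/Q$ for every $c,Q>0$, while $\|q\|=\|\tau(b)\|$ is a fixed constant, hence $\le Q$ for all large $Q$; since $\tau(b)\neq0$, this produces the solutions required by Definition \ref{DEFSING}, so $x$ is singular.

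For the converse I would pass through the dynamical correspondence. Specializing $m=1$, write $G=\mathrm{SL}_2(K_S)$, $\Gamma=\mathrm{SL}_2(\K)$, let $u_x\in G$ be the lower unipotent with $x$ in the $(2,1)$-entry, and let $g_t\in G$ be the one-parameter subgroup appearing in the correspondence of \S\ref{sec2}, whose component at every $\sigma\in S$ is $\operatorname{diag}(e^{-t},e^{t})$. By that correspondence together with Mahler's criterion, $x$ is singular precisely when the orbit $g_tu_x\Gamma$ is divergent in $G/\Gamma$ as $t\to+\infty$: the statement ``for every $c$, for all large $Q$ there is a nonzero $(q,q_0)\in\K^2$ with $\|q\|\le Q$ and $\|qx+q_0\|<c/Q$'' translates exactly into ``the length of the shortest nonzero vector of the lattice $g_tu_x\K^2$ tends to $0$''. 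Now I would invoke Dani's theorem on divergent trajectories: the $\Q$-group $\operatorname{Res}_{K/\Q}\mathrm{SL}_2$ is semisimple of $\Q$-rank $1$ (equal to the $K$-rank of $\mathrm{SL}_2$), its real points are $\mathrm{SL}_2(K\otimes_\Q\R)=\mathrm{SL}_2(K_S)=G$, and $\Gamma=\mathrm{SL}_2(\K)$ is an irreducible non-uniform lattice in $G$; hence every divergent trajectory of a diagonalizable one-parameter subgroup is degenerate divergent. Applied here, this yields a nonzero $v=(a,b)^{\mathsf T}\in\K^2$ with $g_tu_xv\to0$. Since $u_xv=(a,\,xa+b)^{\mathsf T}$, we have $g_tu_xv=(e^{-t}\tau(a),\,e^{t}(x\tau(a)+\tau(b)))$ in $K_S^2$: the first block tends to $0$ automatically, while the second tends to $0$ only if $x\tau(a)+\tau(b)=0$ in $K_S$. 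If $a=0$ this forces $\tau(b)=0$, i.e. $v=0$, a contradiction; thus $a\neq0$, and comparing components gives $x^\sigma\sigma(a)+\sigma(b)=0$, i.e. $x^\sigma=\sigma(-b/a)$ for every $\sigma\in S$, so $x=\tau(-b/a)\in K$.

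The step I expect to require the most care is making the appeal to Dani's divergence theorem precise in this $S$-arithmetic setting: one must check that $(G,\Gamma)=(\mathrm{SL}_2(K_S),\mathrm{SL}_2(\K))$ genuinely satisfies its hypotheses (connected semisimple $G$, irreducible non-uniform lattice $\Gamma$, $\Q$-rank $1$ after restriction of scalars) and, crucially, that ``degenerate divergent'' provides the degeneration direction as a $K$-rational vector — after clearing denominators, an $\K$-integral $v$ — since it is exactly the rationality of $v$ that pins $x$ down to $\tau(K)$ (an arbitrary $v\in K_S^2$ with $x\tau(a)+\tau(b)=0$ imposes nothing on $x$). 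Granting this, the remainder is the short linear-algebra computation above. For $m=1$ one could alternatively argue directly, combining Dirichlet's theorem (Theorem \ref{Dirichlet}) with a geometry-of-numbers analysis of the successive minima of $g_tu_x\K^2$ in $K_S^2$, but routing through Dani's theorem is the most economical.
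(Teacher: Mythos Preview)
Your argument is correct, but it takes a different route from the paper's. Both proofs dispose of the easy implication the same way and both pass through the Dani correspondence (Theorem~\ref{Dani}) to convert singularity into divergence of $g_t u_x\Gamma$ in $\mathrm{SL}_2(K_S)/\mathrm{SL}_2(\K)$. The difference is in how the fixed rational direction is extracted. You invoke Dani's general theorem from \cite{Da} that in $\Q$-rank~$1$ every divergent trajectory is degenerate, applied to $\mathrm{Res}_{K/\Q}\mathrm{SL}_2$; this is precisely the heuristic the paper flags in the Introduction as motivation for the result, but the paper does not use it in the proof. Instead, the paper argues directly and elementarily: it applies the planar fact (Lemma~\ref{lin}, from \cite{E}) that a unimodular lattice in $\R^2$ cannot contain two independent short vectors, componentwise in each $K_\sigma^2$, and then uses continuity in $t$ to conclude that the short vector of $g_t u_x\K^2$ stays in a single $\K$-line $\K v_{t_0}$ for all large $t$, forcing $q_{t_0}x+p_{t_0}=0$. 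This is essentially the ``geometry-of-numbers analysis of the successive minima'' alternative you mention at the end and set aside. Your route is cleaner conceptually and explains \emph{why} the result holds (rank~$1$), at the cost of importing a substantial theorem and having to verify---as you rightly flag---that the degeneration vector produced by Dani's theorem is $K$-rational (it is, since the rational parabolics of $\mathrm{Res}_{K/\Q}\mathrm{SL}_2$ correspond to $K$-lines in $K^2$). The paper's route is self-contained and needs nothing beyond Lemma~\ref{lin}. A minor point: your $u_x$ and $g_t$ are the transpose of the paper's conventions from \S\ref{sec2}, but this is harmless.
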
 
  The trivial case of singularity for $\x\in K_S^m$ is when there exists $q_0\in\K$, $\0\neq\q\in\K^m$ such that $\Vert \q\cdot \x+q_0\Vert=0$, \textit{i.e.}, $\Vert \q^\sigma\cdot\x^\sigma+q_0^\sigma\Vert=0$ for all $\sigma\in S$. This gives us the following definition of a totally irrational vector in $K_S^m$.
  
 \begin{definition}
 \label{DEFTOT}
  A vector $\x=(x_1,\cdots,x_m)\in K_S^m$ is defined to be totally irrational if for any $\0\neq\q\in\K^m,\ q_0\in\K$, there exists some $\sigma\in S$ such that $\Vert\q^\sigma\cdot \x^\sigma+q_0^\sigma\Vert\neq 0$.
  \end{definition}
  
  The final result in this paper is the following inheritance theorem establishing the existence of nondegenerate divergent orbits on nondegenerate manifolds is $K_S^m$. We refer the reader to Section \S \ref{sec4} for a proof of this result.
  
  \begin{theorem}
  \label{manifold}
  Let $M=\prod_{\sigma\in S} M_\sigma$ be a connected real analytic submanifold of $K_S^m$ such that each $M_\sigma$ is not contained in any affine $K$-hyperplane in  $K_\sigma^m$ and $\dim(M_\sigma)\geq 2$. Then there exists uncountably many totally irrational singular vectors $\x$ in $M$ with $\hat{\omega}(\x)=\infty$. 
  \end{theorem}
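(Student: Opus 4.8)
The plan is to adapt the strategy of Kleinbock--Nguyen--Weiss \cite{KNW} for $\mathbb{R}^m$ to the $K_S$-setting, exploiting the Dani correspondence established in Section \S\ref{sec2}. The key observation is that $\hat\omega(\x) = \infty$ corresponds, via that correspondence, to the divergence of the trajectory $\{a_t u_\x \Gamma\}$ in $G/\Gamma$ being \emph{faster than any polynomial rate}: there should be a precise dictionary saying that if one can solve the system $\|\q\cdot\x + q_0\| < e^{-tv}$, $\|\q\| \le e^{t/m}$ for all large $t$ and all $v$, then the shortest vector in the lattice $a_t u_\x \Gamma$ decays superpolynomially; and conversely. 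So the goal reduces to: on the manifold $M$, produce uncountably many points $\x$ whose associated trajectory diverges arbitrarily fast, and which are moreover totally irrational.

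First I would set up, for each $\sigma \in S$, the one-parameter diagonal subgroup $\{a_t^\sigma\}$ acting on $\mathrm{SL}_{m+1}(K_\sigma)$ and the unipotent $u_{\x^\sigma}$ built from $\x^\sigma \in K_\sigma^m$, so that the product $a_t = (a_t^\sigma)_\sigma$ acts on $G/\Gamma$. Parametrize $M_\sigma$ locally by an analytic map $\f_\sigma$ from an open subset of $K_\sigma^{\dim M_\sigma}$, and note that since $\dim M_\sigma \ge 2$ and $M_\sigma$ is not contained in any affine $K$-hyperplane, the coordinate functions of $\f_\sigma$ together with $1$ are linearly independent over $K_\sigma$ (over $\sigma(K)$). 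The construction is then a nested-intervals (Cantor-set) argument: one builds a decreasing sequence of compact boxes $B_1 \supset B_2 \supset \cdots$ in the parameter space, where at stage $n$ one uses the $n$-th integral vector target to force, on all of $B_n$, the inequality $\|\q^{(n)} \cdot \f(\cdot) + q_0^{(n)}\| < e^{-t_n v_n}$ at a carefully chosen time $t_n \to \infty$ with $v_n \to \infty$. The crucial input that makes this possible on a whole box rather than a single point is that the functions are analytic and nonplanar, so the set where a given nonzero affine combination is small has nonempty interior of controllable size; by continuity one can shrink $B_{n-1}$ to a sub-box $B_n$ on which the target inequality holds with room to spare. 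Taking $\x$ in the intersection $\bigcap_n \f(B_n)$ (nonempty, and in fact a perfect set, giving uncountably many points) yields $\hat\omega(\x) = \infty$, hence $\x$ is singular.

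To guarantee total irrationality I would interleave a second family of conditions into the Cantor construction: at the even stages, rather than forcing an inequality, I would \emph{remove} from $B_{n-1}$ a neighborhood of the (lower-dimensional, by the non-containment-in-hyperplane hypothesis applied componentwise) set where $\q^\sigma \cdot \f_\sigma + q_0^\sigma$ vanishes identically for the $n$-th candidate pair $(\q, q_0) \in \mathcal O_K^{m+1}$; since $M_\sigma \not\subset$ any affine $K$-hyperplane, this vanishing locus is a proper analytic subset of $B_{n-1}$, so a sub-box $B_n$ avoiding it exists. Enumerating all pairs $(\q, q_0)$ over all even stages ensures the limit points satisfy Definition \ref{DEFTOT}. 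One must check the two families of conditions are compatible — the "make small" conditions and the "stay off the vanishing locus" conditions — which they are, since the former only constrain the value of one specific combination at one scale while the latter are open conditions; standard bookkeeping keeps the boxes nested and nondegenerate throughout.

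The main obstacle I anticipate is the \emph{simultaneity across the places} $\sigma \in S$: the time parameter $t$ is common to all components, so at stage $n$ one needs a single integral pair $(\q^{(n)}, q_0^{(n)}) \in \mathcal O_K^{m+1}$ whose Galois conjugates \emph{simultaneously} make $\q^\sigma \cdot \f_\sigma(\cdot) + q_0^\sigma$ small on the corresponding box factor, while respecting the height bound $\|\q^{(n)}\| \le e^{t_n/m}$ which involves the max over $\sigma$. This is where one needs a version of Dirichlet's theorem with a controlled implied constant — precisely Theorem \ref{Dirichlet} — applied not at a point but uniformly over the current box, together with the fact that the relevant $K_\sigma$-linear combinations can be made small on each factor by the analyticity/nonplanarity input. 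Reconciling the single integral vector with the $d$ independent smallness requirements, and iterating with $v_n \to \infty$, is the technical heart; everything else is the routine Cantor-scheme bookkeeping and the translation back through the Dani correspondence of Section \S\ref{sec2}.
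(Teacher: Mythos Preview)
Your proposal has a genuine gap in the mechanism for achieving $\hat\omega(\x)=\infty$. You propose to iterate Dirichlet's theorem (Theorem~\ref{Dirichlet}): at stage $n$, apply it at a point of the current box to obtain $(\q^{(n)},q_0^{(n)})$, then shrink by continuity so the inequality holds on a sub-box. But Dirichlet only yields $\Vert\q\cdot\x+q_0\Vert<c/Q^m$ with $\Vert\q\Vert\le Q$; passing to a sub-box by continuity does not improve the exponent, so this scheme produces at best $\hat\omega(\x)\ge m$, which is already automatic. Your remark that ``the set where a given nonzero affine combination is small has nonempty interior of controllable size'' is not enough: to make that combination \emph{arbitrarily} small---which is what $v_n\to\infty$ requires---the sub-box must approach the \emph{vanishing locus} of the form on $M$, and you have not argued that this locus meets the current box, or even that it is nonempty. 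Nor does the proposal explain how the hypothesis $\dim M_\sigma\ge 2$ enters.

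The paper's proof does not use Dirichlet at all. After reducing each $M_\sigma$ to a surface (Proposition~\ref{SD2}) with tangent planes transverse to $\e_1$ and $\e_2$, the intersections $L_i=M\cap A_i$ with affine $K$-hyperplanes $A_i$ orthogonal to $\e_1$ or $\e_2$ are connected real-analytic \emph{curves}, and by weak approximation (density of $K$ in $K_S$) this family $\{L_i\}$ is dense in $M$. On $L_i$ the corresponding form $\q_i\cdot\x+p_i$ vanishes \emph{identically}, so by choosing the next open set $\U_i$ close to $L_{r_i}$ one forces $\Vert\q_{r_{i-1}}\cdot\x+p_{r_{i-1}}\Vert<\zeta(\Phi(\q_{r_i}))$ for \emph{any} prescribed $\zeta$; this is condition~(e) of Theorem~\ref{singular} and is what delivers $\hat\omega=\infty$. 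The hypothesis $\dim M_\sigma\ge 2$ is exactly what makes each $L_i$ a curve rather than a discrete set, so one can iterate: each $L_i$ itself contains a dense family of points $L_i\cap L_j$ with $|A_j|$ arbitrarily large (condition~(3) of Theorem~\ref{singular}). Finally, your ``main obstacle'' of simultaneity across places evaporates in this setup: a single primitive $\h\in\K^{m+1}$ determines, via the embeddings $\sigma\in S$, the hyperplane $H_\sigma$ at every place at once, so there is nothing to reconcile.
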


\noindent
 \begin{remark}
 \leavevmode
 
  \begin{itemize}[leftmargin=1cm]
  
  \item[(1)] Note that defining $A=\prod_{\sigma_i\in S}H_{\sigma_i}\subset K_S^m$, where each $H_{\sigma_i}$ is a $(m-1)$ dimensional affine $K$-subspaces of $K_{\sigma_i}^m$ defined by a polynomial equation $p_i\in \sigma_i(K)[x_1,\cdots,x_m]$ gives an affine $K$-hyperplane in $K_S^m$. However, it is necessary that the affine $K$-hyperplanes appearing in the proofs of the main theorems in \S\ref{sec4} should have the equation $p_i$ of $H_{\sigma_i}$, be defined by the image under the diagonal embedding of $K$ in $K_S$ of a single polynomial $p$. More precisely, $p_i=p^{\sigma_i}$ for a given $p\in K[x_1,\cdots,x_m]$.
  
  \item[(2)] Although this paper deals only with totally real number fields, the definitions of a singular vector and a totally irrational vector remains the same for a general number field. With the slight modifications necessary in incorporating the complex places, the results here as well as their proofs can be carried out for an arbitrary number field $L$, with $S$ being the set of all archimedean places of $L$.
   \item[(3)] In Theorem \ref{push_main}, the function $\f=(\f_\sigma): X\to K_S^m$ is defined componentwise, \textit{i.e.}, if $X=\prod_{\sigma\in S} K_\sigma^{n_\sigma}$ then, $\f(\x):=(\f_\sigma(\x^\sigma))$ for $\x=(\x^\sigma)\in \prod_{\sigma\in S}K_\sigma^{n_\sigma}$. The functions $\f_\sigma$ are from $K_\sigma^{n_\sigma}\to K_\sigma^{m}$. We could consider the more general situation of $\f(\x):=(\f_\sigma(\x))$ where $\f_\sigma:\prod_{\sigma\in S} K_\sigma^{n_\sigma}\to K_\sigma^m$ but then, the same proof will work only when $\sum n_\sigma \leq m$ and each $\f_\sigma$ is nondegenerate as a map from $\R^{\sum n_\sigma} \to \R^m$. 
   \item[(4)] The techniques in this paper should carry forward to yield similar results for the weighted case as well. 
   
   \item[(5)] Finding Hausdorff dimensions of singular vectors in $\R^m$ has been a very active area of research in the recent times. The papers \cite{C},\cite{CC},\cite{KKLM}, and \cite{DFSU} are a few notable works in this direction. A very  recent paper (\cite{AGMS}) deals with the study of divergent trajectories on a finite product of homogeneous spaces, namely $\prod_i\mathrm{SL}_{m_i+n_i}(\R)/\mathrm{SL}_{m_i+n_i}(\Z)$. This led them to determine the Hausdorff dimension of a certain jointly singular matrix tuples. It would be interesting to find the Hausdorff dimension of the set of singular vectors in $K_S^m$ introduced in this paper.  
  \end{itemize}
  \end{remark}

  \noindent
  \textit{Acknowledgements:} We thank Anish Ghosh for suggesting this problem and for his lectures on dynamics on homogeneous spaces at TIFR. Thanks are due to Anish Ghosh, Ralf Spatzier and Dmitry Kleinbock for giving several useful comments on an earlier draft of this paper. We thank TIFR for providing the research atmosphere necessary for the completion of the work. The first named author thanks University of Michigan for the support given to work on this project. She also thanks Subhajit Jana for his constant support. The second named author is grateful to NBHM for funding the research.

 \section{Dirichlet's theorem for $K_S^m$}
\label{DTP}
 In this section we give the proof of the Dirichlet's theorem stated in the Introduction, in the context of dual approximation in $K_S^m$.
 
 \noindent
 \textit{Notation:} If $M_K$ is the set of all absolute values on $K$, an \textit{$M_K$-divisor} $\mathfrak c: M_K\to\R_+$ is a real valued function on the absolute values satisfying
   \begin{itemize}
       \item $\mathfrak{c}(v)=1$ for all but finitely many $v\in M_K$.
       \item For $v$ a non archimedean valuation, there is an element $\alpha\in K$ such that $\mathfrak{c}(v)=\vert\alpha\vert_v.$
   \end{itemize}
   The \textit{$K$-size} of $\mathfrak{c}$ is defined to be $\Vert\mathfrak c\Vert=\prod_{v\in M_K}\mathfrak{c}(v)^{N_v}$, where $N_v$ is the multiplicity of $v$. We should interpret $\mathfrak{c}(v)$ as prescribing the lengths of the sides of a box (all but a finite number of which are 1) then, the $K$-size of $\mathfrak{c}$ determines the volume of the box. For our purpose we need consider only the $M_K$-divisors $\mathfrak c$ for which $\mathfrak c(v)=1$ for all $v$ non archimedean. Also, $N_v=1$ for all archimedean $v$.

   \begin{proof}[Proof of Theorem \ref{Dirichlet}:]
   Let $X$ be the set of points in $K_S^{m+1}$ given by
   $$X=\left\{(q_1,\cdots,q_m,p)\in K_S^{m+1}\left|~ \begin{aligned}&\Vert q_i\Vert\leq Q{(D_K)}^{\frac{1}{2d}},\ 1\leq i\leq m; \\&
   \Vert \sum_{i=1}^m q_i x_i+p\Vert\leq \frac{{(D_K)}^{\frac{1}{2d}}}{Q^m}\end{aligned}\right.\right\}$$
   where $D_K$ is the discriminant of $K$ and $\sqrt{D_K}$ is the volume of the fundamental domain of the lattice $\K$ in any embedding $K\otimes_\Q\R$. Clearly, $X$ is a convex subset in $K_S^{m+1}$. For each $\sigma_i\in S$, if $v_i$ is the corresponding absolute value, define an $M_K$-divisor $\mathfrak c_1$ with  $\mathfrak{c}_1(v_i)=2Q(D_K)^{\frac{1}{2d}}$. Then, for $1\leq j\leq m$, the contribution to the volume of $X$ from each $K_S$ component equals the $K$-size of $\mathfrak{c}_1$, \textit{i.e.,} $$\Vert\mathfrak c_1\Vert=\prod_{i=1}^d2Q (D_K)^{\frac{1}{2d}}=(2Q)^d\sqrt{D_K}.$$ For $j=m+1$, we define the $M_K$-divisor $\mathfrak c$ to be $\mathfrak c(v_i)=\frac{2}{Q^m}(D_K)^{\frac{1}{2d}}$. An easy computation gives  $\Vert\mathfrak{c}\Vert=\prod_{i=1}^d\frac{2}{Q^m}(D_K)^{\frac{1}{2d}}=\frac{2^d}{Q^{md}}\sqrt{D_K}.$ Now, the volume of $X$  
   $$\vol(X)=\Vert\mathfrak{c}\Vert\prod_{j=1}^m\Vert\mathfrak c_1\Vert=2^{d(m+1)}(\sqrt{D_K})^{m+1}. $$
   Applying Minkowski's convex body theorem, we get the desired lattice point of $\K^{m+1}$. 
   \end{proof}

 \section{Mahler's compactness criterion and Dani Correspondence}
 \label{sec2}

 For an arbitrary integer $m>0$, consider,  $\x=(\x^{\sigma})_{\sigma\in S}\in\prod_{\sigma\in S}K_\sigma^m\simeq K_S^m$. Let $G=\mathrm{SL}_{m+1}(K_S)$ and let $u_\x\in G$ be the matrix $u_{\x}=\begin{bmatrix} 1 &\x\\
 0 & I_{m}\end{bmatrix}$.
 Since $K$ is assumed to be totally real, each $K_\sigma^m\simeq\R^m$ and so $\x^\sigma\in K_\sigma^m$ embeds in $\mathrm{SL}_{m+1}(K_\sigma)$ as 
 $$\x^\sigma\mapsto u_{\x^\sigma}=\begin{bmatrix} 1 &\x^\sigma\\
0& I_{m}\end{bmatrix},$$
 giving an embedding of $K_S^m\hookrightarrow G\simeq\prod_{\sigma\in S} \mathrm{SL}_{m+1}(K_\sigma)$ as $\x\mapsto\prod_{\sigma\in S} u_{\x^\sigma}$ and in $\mathrm{SL}_{d(m+1)}(\R)$ as
 $$\x\mapsto \text{diag}((u_{\x^{\sigma}})_{\sigma\in S})$$
 
 We consider the one parameter subgroup of $G$ denoted by $\{g_t=(g_t^\sigma)\}$ and given by the block diagonal matrices comprising blocks of the one parameter subgroup in each $\mathrm{SL}_{m+1}(K_\sigma)$, that is,
 $$g_{t}^\sigma=\begin{bmatrix} e^{mt} & 0 & \cdots & 0\\0 & e^{-t} & \cdots  & 0\\ \vdots & \vdots & \vdots & \vdots\\0 & 0 & \cdots & e^{-t}
\end{bmatrix}.$$
 
 The discrete subgroup $\Gamma=\mathrm{SL}_{m+1}(\K)$ is a lattice in $G$ and $\{g_t\}$ acts on the homogeneous space $G/\Gamma$ on the left. We identify $G/\Gamma$ with the set $\Omega_{S,m+1}$ of discrete rank $(m+1)$ $\K$-module $g\K^{m+1}$ (corresponding to the elements $g\Gamma$, $g\in G$) in $\Kn$, having an $\K$-basis $\{p_1,p_2,\cdots,p_{m+1}\}$ for $\Kn$ such that, for each $\sigma\in S$, $\{\sigma(p_1),\sigma(p_2),\cdots, \sigma(p_{m+1})\}$ form sides of a parallelopiped of volume 1 in $\R^{m+1}$. 
 
 With the above, we have introduced the notation necessary to establish the Dani correspondence. We further require the $S$-adic version of the  Mahler's compactness criterion for the same. A general version of the Mahler's criterion in the case of a number field, for a finite set of places $S$ containing the archimedean places, is proved in \cite{DKRT} and \cite{DKGT}. We provide below the proof in the case of a totally real number field, for $S=\infty$. As above, let $$\Omega_{S,m+1}=\{g\K^{m+1}\mid g\in G\}.$$
 
 \begin{theorem}[Mahler's Criterion]
 \label{Mah}
A subset $A\subset \Omega_{S,m+1}$ is relatively compact if and only if there exists an open $\varepsilon$-neighbourhood,  $B_\varepsilon$, of $\0$ in $K_S^m$ such that, for all $\Lambda\in A$, $\Lambda\cap B_\varepsilon =\{0\}$.
 \end{theorem}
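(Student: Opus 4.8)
The plan is to deduce the criterion from the classical Mahler compactness criterion for the space $\mathcal{L}_N$ of unimodular lattices in $\R^N$, $N:=d(m+1)$, by realizing $\Omega_{S,m+1}$ as a \emph{closed} subspace of $\mathcal{L}_N$. Recall that the Minkowski embedding makes $\K$ a lattice of covolume $\sqrt{|D_K|}$ in $K_S\cong\R^d$ — the very fact used in the proof of Theorem \ref{Dirichlet} — so that $\K^{m+1}$ is a lattice of covolume $|D_K|^{(m+1)/2}$ in $K_S^{m+1}\cong\R^N$. Since $G=\prod_{\sigma\in S}\mathrm{SL}_{m+1}(K_\sigma)$ acts block-diagonally and volume-preservingly on $\R^N$, every $\Lambda=g\K^{m+1}$ with $g\in G$ is a lattice of the same covolume, and rescaling by $|D_K|^{-1/(2d)}$ would give a continuous map $\iota\colon\Omega_{S,m+1}\to\mathcal{L}_N$ whose image is a single orbit for the block-diagonal action $G\hookrightarrow\mathrm{SL}_N(\R)$. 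First I would check that $\iota$ is injective, using that $\mathrm{Stab}_G(\K^{m+1})=\Gamma$: an element of $G$ preserving $\K^{m+1}$ is automatically $\K$-linear, hence comes from a matrix in $\mathrm{GL}_{m+1}(\K)$, and having determinant $1$ in every archimedean place it lies in $\Gamma=\mathrm{SL}_{m+1}(\K)$.

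For the implication ``relatively compact $\Rightarrow$ some $B_\varepsilon$ works'' I would argue by contradiction: if no such $B_\varepsilon$ exists, pick $\Lambda_k\in A$ and $0\neq w_k\in\Lambda_k$ with $\|w_k\|\to 0$; writing $\Lambda_k=g_k\K^{m+1}$ and, using that $\pi\colon G\to G/\Gamma$ is a covering map and $A$ relatively compact, passing to a subsequence with $g_k\to g$ in $G$, the points $u_k:=g_k^{-1}w_k\in\K^{m+1}\setminus\{0\}$ would converge to $0$, contradicting the discreteness of $\K^{m+1}$ in $K_S^{m+1}$. (Equivalently, one checks that the systole $\Lambda\mapsto\inf_{0\neq w\in\Lambda}\|w\|$ is positive and lower semicontinuous on $\Omega_{S,m+1}$, hence attains a positive minimum on the compact set $\overline{A}$; this minimum supplies the $\varepsilon$.)

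For the converse I would use that $\iota$ is a \emph{closed} embedding, which is where the arithmetic enters. Since $\Gamma=\mathrm{SL}_{m+1}(\K)$ is a lattice in $G$ (as already noted in the text), the image $\iota(\Omega_{S,m+1})$ is a $G$-orbit carrying a finite $G$-invariant measure; as $G$ is semisimple, hence has no nontrivial continuous characters, a finite-volume orbit of this type is closed, and closedness of the orbit makes the orbit map $\iota$ a homeomorphism onto its closed image. Granting this: if $\Lambda\cap B_\varepsilon=\{0\}$ for all $\Lambda\in A$, then after the fixed rescaling (and using that the sup-norm and the Euclidean norm on $\R^N$ are equivalent) $\iota(A)$ avoids a fixed ball about $0$, so the classical Mahler criterion makes $\overline{\iota(A)}$ compact in $\mathcal{L}_N$; since $\iota(\Omega_{S,m+1})$ is closed this closure lies in $\iota(\Omega_{S,m+1})$, and then $\overline{A}$ sits inside the compact set $\iota^{-1}\big(\overline{\iota(A)}\big)$, hence is compact.

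I expect the main obstacle to be exactly the closedness of $\iota$, i.e.\ reduction theory for $\mathrm{SL}_{m+1}$ over $\K$: one must control an entire $\K$-\emph{basis} of $\Lambda$, not merely its short vectors, and a greedy construction of such a basis is obstructed by the class group of $K$ (a shortest primitive rank-one $\K$-submodule of $\Lambda$ need not be free), so the finite-volume-orbit-is-closed principle, or a genuine Siegel-set argument, is doing real work here. A more self-contained alternative, avoiding the orbit-closedness black box, would be to invoke a Siegel set $\mathfrak{S}$ with $G=\mathfrak{S}\Gamma$ and to show, by iterating Minkowski's convex body theorem over the nested images $g(\K e_j\oplus\cdots\oplus\K e_{m+1})$ — the same Minkowski estimate as in the proof of Theorem \ref{Dirichlet}, now applied to $\K$-lattices in $\R^{(m+2-j)d}$ — that the hypothesis $\inf_{0\neq w\in g\K^{m+1}}\|w\|\geq\varepsilon$ confines the torus component of $g\in\mathfrak{S}$ to a compact set, and hence $g\Gamma$ to a compact subset of $\Omega_{S,m+1}$.
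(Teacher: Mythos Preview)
Your approach is correct and shares the same high-level strategy as the paper: embed $\Omega_{S,m+1}$ into the space of unimodular lattices in $\R^N$, $N=d(m+1)$, and invoke the classical Mahler criterion there. The substantive difference is in how the closedness of the embedding is secured. You argue via the principle that finite-volume orbits of semisimple groups are closed (or, as a fallback, via a direct Siegel-set argument), and you rightly flag this step as the crux. The paper instead invokes Weil restriction of scalars: it forms the $\Q$-algebraic group $G'=R_{K/\Q}(\mathrm{SL}_{m+1})$, checks that with an integral basis of $\K$ one has $G'(\R)=\mathrm{SL}_{m+1}(K_S)$ and $G'(\Z)=\mathrm{SL}_{m+1}(\K)$, and then simply quotes the classical Mahler criterion for $G'(\R)/G'(\Z)\hookrightarrow\mathrm{SL}_N(\R)/\mathrm{SL}_N(\Z)$. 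This packaging bypasses your orbit-closure concern by importing the arithmetic structure at the level of the algebraic group rather than at the level of the orbit, so that properness of the embedding is a standard fact about arithmetic quotients of $\Q$-subgroups of $\mathrm{SL}_N$ rather than something to be argued ad hoc. Your route is sound but more circuitous; restriction of scalars is precisely the device that converts reduction theory over $\K$ into reduction theory over $\Z$, and using it here is both shorter and more in line with how such results are typically cited.
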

 
 \begin{proof}
 Let $K[X]\colonequals K[X_{11},\cdots,X_{(m+1)(m+1)}]$ for a deg $d$ number field $K\subset\C$. We identify $M_{m+1}(\C)$ with $\C^{(m+1)^2}$ in the natural way. Consider the algebraic $K$-group $G$ in $\C^{(m+1)^2}$ determined by the zero set of the polynomial $\det(X)-1\in K[X]$. The $K$-points of this group $G(K)$ is $\mathrm{SL}_{m+1}(K)$. Using the Weil restriction of scalars functor $R_{K/\Q}$, we can construct a $\Q$-algebraic subgroup $G'=R_{K/\Q}(G)$ of $\mathrm{SL}_N(\Z)$ (suitably large $N$). The construction is such that, the $\Q$-points of $G'$ are the $K$-points of $G$, \textit{i.e.,} $\mathrm{SL}_{m+1}(K)=G'(\Q)$. We refer the reader to Section \S 2.1.2 in \cite{PLAT} and also Section \S 6.1 in \cite{ZIM} for the construction of $G'$. However, we mention here that if the basis $\{\alpha_1,\cdots,\alpha_d\}$ for $K$ over $\Q$, in the beginning of the construction, is chosen to be an $\K$-basis such that $\K=\sum_{i}\Z\alpha_i$, then we also have $\mathrm{SL}_{m+1}(\K)=G'(\Z)$. Further, as $\sigma_i(K)\subset \R$ for all $i$, we get $\mathrm{SL}_{m+1}(K_S)=G'(\R)$. The Mahler's compactness criterion for $\mathrm{SL}_{m+1}(\R)/\mathrm{SL}_{m+1}(\Z)$ in \cite{Dave} now completes the proof.
 \end{proof}
 
 \begin{theorem}[Dani Correspondence]
 \label{Dani}
   A vector $\x\in K_S^m$ is singular if and only if the corresponding trajectory $\{g_tu_\x\K^{m+1}\mid t\geq 0\}$ is divergent in $G/\Gamma$.
  \end{theorem}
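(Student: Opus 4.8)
The plan is to prove Theorem~\ref{Dani} (the Dani correspondence) by translating the arithmetic condition of singularity into the geometric statement that the trajectory $\{g_t u_\x \K^{m+1} : t\ge 0\}$ leaves every compact subset of $G/\Gamma$. By the $S$-adic Mahler criterion (Theorem~\ref{Mah}), the trajectory is divergent if and only if for every $\varepsilon>0$ there exists $T$ such that for all $t\ge T$, the module $g_t u_\x \K^{m+1}$ contains no nonzero vector in the $\varepsilon$-ball $B_\varepsilon$ of $K_S^{m+1}$. So the whole proof reduces to computing, for a nonzero lattice vector $(q_0,\q)\in\K^{m+1}$ (identified with a column of the $\K$-module via $u_\x$), the size of $g_t u_\x (q_0,\q)^{\mathrm t}$ in the sup-norm on $K_S^{m+1}$, and matching the smallness of that norm against the defining inequalities of a singular vector in Definition~\ref{DEFSING}.

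The key computation is as follows. For $(q_0,\q)\in\K^{m+1}$ with $\q=(q_1,\dots,q_m)$, applying $u_\x$ gives $u_\x(q_0,\q)^{\mathrm t}=(q_0+\q\cdot\x,\,q_1,\dots,q_m)$, and then $g_t$ scales the first coordinate by $e^{mt}$ and each remaining coordinate by $e^{-t}$, in each archimedean place simultaneously. Hence
\[
\big\| g_t u_\x (q_0,\q)^{\mathrm t}\big\| = \max\Big( e^{mt}\|\q\cdot\x+q_0\|,\ e^{-t}\|\q\|\Big).
\]
Now I would run the two implications. If $\x$ is singular: given $\varepsilon>0$, set $c=\varepsilon^{m+1}$ (or a similarly chosen constant), and for large $Q$ pick the guaranteed nonzero $(q_0,\q)$ with $\|\q\cdot\x+q_0\|<c/Q^m$ and $\|\q\|\le Q$; choosing $t$ with $e^t$ comparable to $Q/\varepsilon$ makes both $e^{mt}\|\q\cdot\x+q_0\|$ and $e^{-t}\|\q\|$ less than $\varepsilon$, and since this works for all large $Q$ it works for all large $t$, so by Mahler the trajectory is divergent. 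Conversely, if the trajectory is divergent: given $c>0$, choose $\varepsilon$ small (with $\varepsilon$ depending on $c$ as dictated by the same relation), obtain $T$ such that $g_t u_\x\K^{m+1}\cap B_\varepsilon=\{0\}$ for $t\ge T$ — equivalently, for every such $t$ there is a nonzero $(q_0,\q)\in\K^{m+1}$ with $\|g_t u_\x(q_0,\q)^{\mathrm t}\|<\varepsilon$; wait, that is the wrong direction. I would instead argue that divergence fails precisely when some compact set is visited infinitely often, i.e. when there are arbitrarily large $t$ with a nonzero lattice vector of norm $\le\varepsilon$ under $g_tu_\x$; the contrapositive gives, for divergent trajectories, that for large $t$ every nonzero lattice vector has norm $\le \varepsilon$ — no. Let me restate cleanly: the correct reading of Mahler is that $A$ is \emph{relatively compact} iff a uniform $\varepsilon$-hole exists, so the trajectory is divergent iff for each $\varepsilon>0$ there is $T_\varepsilon$ with $g_tu_\x\K^{m+1}\cap B_\varepsilon=\{0\}$ for all $t\ge T_\varepsilon$; equivalently, for $t\ge T_\varepsilon$ there is no nonzero lattice vector of norm $<\varepsilon$. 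For the converse of the theorem I therefore want the opposite: I use that $\x$ is \emph{not} singular iff for some $c>0$ there are arbitrarily large $Q$ admitting no nonzero solution, which via the norm formula produces arbitrarily large $t$ at which a whole $\varepsilon'$-ball meets the module only at $0$ is again automatic — the real content is that Dirichlet's Theorem~\ref{Dirichlet} guarantees a nonzero lattice vector of norm $\le c^{1/(m+1)}$ for \emph{every} $t$, so the module always comes within a bounded distance of $0$; divergence then forces the norm of that Dirichlet vector to tend to $0$, which is exactly singularity with arbitrarily small $c$.

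Concretely, the clean converse argument is: by Dirichlet (Theorem~\ref{Dirichlet}), for each $t$ there is a nonzero $(q_0,\q)\in\K^{m+1}$ with $\|g_tu_\x(q_0,\q)^{\mathrm t}\|\le \kappa$ for a fixed constant $\kappa$ depending only on $K,m$; if the trajectory is divergent, then for each $\varepsilon>0$ there is $T$ beyond which \emph{every} nonzero lattice vector $v$ has $\|g_tu_\x v\|\ge \varepsilon$ — this is false in general, so instead divergence must be phrased as: the \emph{shortest} nonzero vector of $g_tu_\x\K^{m+1}$ tends to $0$. That is the precise equivalent of Mahler here, and it immediately yields, via the norm identity, that one can take $c\to 0$, i.e. $\x$ is singular. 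I expect the main obstacle to be purely bookkeeping: pinning down the exact relationship between the constant $c$ in Definition~\ref{DEFSING}, the radius $\varepsilon$, the time $t$, and the height bound $Q$ (roughly $e^t\asymp Q$ up to the fixed discriminant constants from Theorem~\ref{Dirichlet}), and making sure the ``for all sufficiently large $Q$'' quantifier in singularity matches the ``for all $t\ge T_\varepsilon$'' quantifier in divergence — this requires checking that the map $Q\mapsto t$ is monotone and proper so that cofinal sets correspond. There is no serious analytic difficulty: once the norm formula $\|g_tu_\x(q_0,\q)^{\mathrm t}\|=\max(e^{mt}\|\q\cdot\x+q_0\|,e^{-t}\|\q\|)$ is in hand, both directions are a two-line optimization in $t$, and the totally real hypothesis is used only to ensure $K_\sigma\cong\R$ so that $g_t$ acts diagonally with the stated real eigenvalues in each place.
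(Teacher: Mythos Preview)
Your proposal is correct and follows essentially the same route as the paper: both rest on the norm identity $\|g_t u_\x (q_0,\q)^{\mathrm t}\| = \max(e^{mt}\|\q\cdot\x+q_0\|,\,e^{-t}\|\q\|)$ together with Mahler's criterion, matching the parameters via $Q \sim \varepsilon e^{t}$ and $c = \varepsilon^{m+1}$. The paper runs both implications by contrapositive (not divergent $\Rightarrow$ not singular, and vice versa), which sidesteps the momentary confusion you had about which way Mahler's criterion reads; once you fix the direction---divergent $\Leftrightarrow$ the shortest nonzero vector of $g_t u_\x\K^{m+1}$ tends to $0$---your argument and the paper's are interchangeable, and the Dirichlet detour you briefly considered is unnecessary.
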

   \begin{proof}
  
  \noindent 
  Suppose $\{g_t u_{\x}\K^{m+1}\}$ is not divergent in $G/\Gamma$. By Theorem \ref{Mah} this means that, there exist an $\varepsilon>0$ and a sequence $t_k\to\infty$ as $k\to\infty$ such that, $\Vert g_{t_k}u_{\x}
  \begin{bmatrix}q^{(k)}_0\\ \vdots\\ q^{(k)}_m\end{bmatrix}\Vert \geq \varepsilon$ for some nonzero $(q^{(k)}_0,\dots,q^{(k)}_m)\in \K^{m+1}$. The inequality above gives
  \begin{equation*}
  \label{duala}
  \Vert\Bigl(e^{mt_k}(q^{(k)}_0+ q^{(k)}_1 x_1+\cdots+ q^{(k)}_m x_m),e^{-t_k} q^{(k)}_1,\cdots,e^{-t_k} q^{(k)}_m \Bigr)\Vert\geq \varepsilon.
  \end{equation*}
  Therefore, for  $Q_k=\varepsilon e^{t_k}$ the system \ref{sing} has no nonzero solution in $\K^{m+1}$ for $c=\varepsilon^{m+1}$. Hence $\x$ is not singular. 
  Conversely, suppose that $\x$ is not singular. Then, there exists an $\varepsilon>0$ such that, for $Q_k\to \infty$ the system \ref{sing} has no nonzero solution in $\K^{m+1}$. Choosing  $t_k>0$ such that $e^{t_k}=Q_k$ yields $g_{t_k} u_{\x}\K^{m+1}$ avoids an open $\frac{\varepsilon}{2}$-neighbourhood of $\mathbf{0}$ as $t_k\to\infty$, i.e., $g_t u_{\x}\K^{m+1}$ does not diverge as $t\to\infty$.

  \end{proof}
  
  \section{Singular vectors and friendly measure}
  \label{sec3}
   \subsection{Friendly measure}\label{sec3.1}
   \subsubsection{Besicovitch space} A metric space $X$ is called \emph{Besicovitch} \cite{KT} if there exists a constant $N_X$ such that the following holds: for any bounded subset $A$ of $X$ and for any family $\mathcal{B}$ of nonempty open balls in $X$ such that
   every  $x \in A$  is a center of some ball in $\mathcal B,$
 there is a finite or countable subfamily $\{B_i\}$ of $\mathcal B$ with
 $$ 1_A \leq \sum_{i}1_{B_i} \leq N_X. $$
 \subsubsection{Federer measure}
 We now define $D$-Federer measures following \cite{KLW}. Let $\mu$ be a Radon measure on $X$, and $U$ an open subset of $X$ with $\mu(U) > 0$. We say that $\mu$ is \textit{$D$-Federer on $U$} if
$$ \sup_{\substack{x \in \supp \mu, r > 0\\ B(x, 3r) \subset U}} \frac{\mu(B(x, 3r))}{\mu(x,r)} < D.$$
We say that $\mu$ as above is \textit{Federer} if, for $\mu$-a.e. $x \in X$, there exists a neighbourhood $U$ of $x$ and $D > 0$ such that $\mu$ is $D$-Federer on $U$. We refer the reader to \cite{KLW} and \cite{KT} for examples of Federer measures.   

\subsubsection{Nonplanar measure}
Suppose $\mu=\prod_{\sigma\in S}\mu_\sigma$ is a measure on $K_S^m$, where $\mu_\sigma$ is a measure on $K_\sigma^m$. We call $\mu$ \textit{nonplanar} if, for all $\sigma\in S$, $\mu_\sigma(\mathcal L_\sigma) = 0$ for any affine hyperplane $\mathcal L_\sigma$ of $K_\sigma^m$. 
Let $X=\prod_{\sigma\in S} X_\sigma$ be a metric space, $\mu=\prod_{\sigma\in S}\mu_\sigma$ a measure on $X$ and $\f=(\f_\sigma)=((f_{1,\sigma}, \cdots, f_{m,\sigma}))$ a map from $X\to K_S^m$. The pair $(\f, \mu)$ will be called \textit{nonplanar at $x_0=(x_0^\sigma)\in X$} if, for any neighborhood $B=\prod_{\sigma\in S}B_\sigma$ of $x_0$, the restrictions of $1, f_{1,\sigma}, \cdots, f_{m,\sigma}$ to $B_\sigma \cap \supp \mu_\sigma$ are linearly independent over $\R$ for all $\sigma\in S$. This is equivalent to saying that for each $\sigma$, $\f_\sigma (B_\sigma\cap\supp\mu_\sigma)$ is not contained in any proper affine subspace of $K_\sigma^m$. Thus, $\mu$ on $K_S^m$ is  nonplanar if and only if $(\mathbf{Id}, \mu)$ is  nonplanar.
\subsubsection{Decaying measure}
Denote by $d_{\mathcal{L}}(\x)$ the (Euclidean) distance from $\x$ to an affine subspace $\mathcal{L}\subset K_\sigma^m$, and let $$\mathcal{L}(\varepsilon)\colonequals \{\x\in K_\sigma^m: d_{\mathcal{L}}(\x)<\varepsilon\}.$$
For $A \subset K_\sigma^m$ with $\mu_\sigma(A)>0$ and $f$ a $K_\sigma$-valued function on $K_\sigma^m$, denote $$\Vert f\Vert_{\mu,A}\colonequals\sup_
{x\in A\cap\,\supp\mu_\sigma}\vert f(\x)\vert.$$
Given $C, \alpha > 0$ and an open subset $U_\sigma$ in  $K_\sigma^m$, we say that $\mu_\sigma$ is \textit{$(C, \alpha)$-decaying on $U_\sigma$} if, for any non-empty open ball $B\subset U_\sigma$ centered in $\supp\mu_\sigma$,
any affine hyperplane $\mathcal L\subset K_\sigma^n$, and any $\varepsilon> 0$ we have $$
\mu_\sigma( B\cap \mathcal{L}(\varepsilon))\leq C {\left(\frac{\varepsilon}{\Vert d_\mathcal{L}\Vert_{\mu_\sigma, B}}\right)}^\alpha \mu_\sigma (B).
$$
\noindent 
Finally, we define \textit{friendly} measures as follows.
\begin{definition}[Friendly measure]
We call a measure $\mu=\prod_\sigma\mu_{\sigma}$ on $K_S^m$ as friendly if,  $\mu$ is nonplanar and $\forall\ \sigma\in S$, for $\mu_\sigma$-a.e. $\x^\sigma\in K_\sigma^m$, there exist a neighborhood $U_\sigma$ of $\x^\sigma$ and positive $C_\sigma,\alpha_\sigma, D > 0$ such that $\mu_\sigma$ is $D$-Federer and $\mu_\sigma$ is $(C_\sigma, \alpha_\sigma)$-decaying on $U_\sigma$.
\end{definition}

\subsection{ Nondegeneracy in $K_S^m$}
\label{nondeg}
 \subsubsection{Good maps}
 Let us recall the notion of $(C,\alpha)$-good functions which was first introduced in \cite{KM}.
 Let $X$ be a metric space and $\mu$ a Borel measure on $X$. For a subset $U$ of $X$ and $C, \alpha > 0$, say that a Borel measurable function $f : U \to \R$ is \textit{$(C, \alpha)$-good on $U$ with respect to $\mu$} if for any open ball $B \subset U$ centered in $\supp \mu$ and $\varepsilon > 0$, we have the condition
 \begin{equation}
 \label{gooddef}
 \mu \left(\{ x \in B : |f(x)| < \varepsilon \} \right) \leq
 C\left(\displaystyle \frac{\varepsilon}{\|f\|_{\mu, B}}\right)^{\alpha}\mu(B).
 \end{equation}
 
 \noindent
 Here, $\|f\|_{\mu, B} = \sup \{c : \mu(\{x \in B : |f(x)| > c\}) > 0\}$.
 Suppose $\f: X\to  \R^n$ is a map and $\mu$ is a Borel measure on $X$, we say $(\f,\mu)$ is \textit{good at $x_0\in X$} if there exists a neighborhood $V$ of $x_0$ and positive $C,\alpha$ such that, any linear combination of $1, f_1,\cdots, f_n$ is $(C,\alpha)$-good on $V$ with respect to $\mu$.\\
 
 \noindent 
  We recall the definition of nondegeneracy as in \cite{KM}. 
  \begin{definition}[Nondegenerate]
Consider a $d$ dimensional submanifold $M = \{\f(\x)\mid \x\in U\}$ of $\R^n$, where $U$ is an open subset of $\R^d$ and $\f = (f_1,\dots,f_n)$ is a $C^m$-imbedding of $U$ into $\R^n$. For $l\le m$, say that $\y = \f(\x)$ is an {\it $l$-nondegenerate point\/} of $M$ if, the space $\R^n$ is spanned by the partial derivatives of $\f$ at $\x$ of order up to $l$.   
\end{definition}
 \noindent We will say that $\y$ is 
 {\it nondegenerate\/} if it is $l$-nondegenerate for some $l$. Finally, we call the manifold $M$ nondegenerate if for $\lambda$-a.e. $\x\in U$, $\f(\x)$ is nondegenerate, where $\lambda$ is the Lebesgue measure on $\R^d$. We can now define nondegenerate maps in $K_S^m$. Suppose $\f=(\f_\sigma):\prod_{\sigma\in S}U_\sigma\rightarrow K_S^m$ be a continuous map, where $U_\sigma\subset K_\sigma^{d_\sigma}$ be an open set. We say \textit{$\f$ is nondegenerate} if each $\f_\sigma:U_\sigma\to K_\sigma^m$ is nondegenerate in the above sense.
 
 \indent Let us recall the following proposition from \cite{KM}, which guarantees that for a  nondegenerate $\f=(\f_\sigma)$ in $K_S^m$, each $\f_\sigma$ is good with respect to the Lebesgue measure $\lambda$ and hence, by Lemma 2.2 in \cite{KT}, $\f$ is $(C, \alpha)$-good for some positive $C, \alpha$. 
 \begin{proposition}
 Let $\f = (f_1,\cdots,f_n)$ be a $C^l$ map from an open subset $U$ of $\R^d$ to $\R^n$, and let $\x_0\in U$ be such that $\R^n$ is spanned by partial derivatives of $\f$ at $\x_0$ of order up to $l$. Then there exists a neighborhood $V\subset U$ of $\x_0$ and a positive $C$  such that any linear combination of $1,f_1,\dots,f_n$ is $(C,1/dl)$-good on $V$.
 \end{proposition}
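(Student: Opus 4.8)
The plan is to recall the argument of \cite{KM}, which reduces everything to a one-variable Remez-type estimate together with a Fubini slicing device. Since $(C,\alpha)$-goodness is preserved under multiplying $f$ by a nonzero scalar and a constant function is trivially $(1,\alpha)$-good, and since $(C,\alpha)$-good implies $(\max\{C,1\},\alpha')$-good for $0<\alpha'\le\alpha$, it suffices to produce a neighborhood $V$ of $\x_0$ and a constant $C$ such that every $g=c_0+\sum_{i=1}^n c_if_i$ with $\|(c_1,\dots,c_n)\|=1$ and $c_0\in\R$ arbitrary is $(C,1/(dl))$-good on $V$. I would reduce this to the self-contained claim: \emph{a $C^l$ function on a ball in $\R^d$ having a nonvanishing pure partial derivative of order $\le l$ in each of the $d$ coordinate directions is $(C_{d,l},1/(dl))$-good, with $C_{d,l}$ depending only on $d$ and $l$.} For the reduction, the hypothesis supplies multi-indices $\beta^{(1)},\dots,\beta^{(n)}$ with $1\le|\beta^{(i)}|\le l$ for which $\partial^{\beta^{(1)}}\f(\x_0),\dots,\partial^{\beta^{(n)}}\f(\x_0)$ is a basis of $\R^n$, so $c\mapsto\bigl(\partial^{\beta^{(i)}}g(\x_0)\bigr)_i=\bigl(\langle c,\partial^{\beta^{(i)}}\f(\x_0)\rangle\bigr)_i$ is a linear isomorphism and hence $\max_i|\partial^{\beta^{(i)}}g(\x_0)|\ge\kappa_0>0$ for every unit $c$; equivalently the degree-$\le l$ part of the Taylor expansion of $g$ at $\x_0$ is a nonzero polynomial, uniformly in $c$. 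As a nonzero polynomial of degree $\le l$ vanishes on a proper algebraic subset of $\R^d$, the ``good directions'' at $\x_0$ (unit $v$ with $\partial_v^j g(\x_0)\ne0$ for some $1\le j\le l$) are Zariski dense, hence contain $d$ linearly independent vectors; a compactness argument over the unit sphere of coefficient vectors then yields linearly independent unit vectors $v_1(c),\dots,v_d(c)$ and orders $j_1(c),\dots,j_d(c)\in\{1,\dots,l\}$ with $|\partial_{v_i}^{j_i}g(\x_0)|\ge\kappa_1$ and the $v_i$ uniformly linearly independent, where $\kappa_1>0$ does not depend on $c$. Passing to linear coordinates in which $v_1,\dots,v_d$ become the standard basis (distorting balls only into uniformly comparable ellipsoids) and shrinking to a neighborhood $V$ on which $|\partial_{y_i}^{j_i}g|\ge\kappa_1/2$ for all $i$, uniformly over $c$, puts us exactly in the situation of the self-contained claim.

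The core one-variable input is: if $h\in C^q$ on an interval $I\subset\R$ with $|h^{(q)}|\ge\delta>0$ on $I$, then $h$ is $(C_q,1/q)$-good on $I$ with $C_q$ depending only on $q$ — in particular not on $\delta$, by scaling invariance. Here $h^{(q)}$ has constant sign, so iterated Rolle gives that $h-a$ has at most $q$ zeros for every constant $a$, whence $\{x\in B:|h(x)|<\varepsilon\}$ has at most $2q+1$ components on any subinterval $B$; combining this with a Remez-type inequality for such ``generalized polynomials'', $\sup_B|h|\le A_q\,(|B|/|E|)^q\sup_E|h|$ for measurable $E\subset B$, applied to $E=\{x\in B:|h(x)|<\varepsilon\}$, yields $\bigl|\{x\in B:|h(x)|<\varepsilon\}\bigr|\le C_q(\varepsilon/\|h\|_B)^{1/q}|B|$.

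Next I would invoke the slicing lemma of \cite{KM}: for a product $\mu=\mu'\times\mu''$ and a box $B=B'\times B''$, if $F(\cdot,y)$ is $(C_1,\alpha_1)$-good on $B'$ for a.e.\ $y$ and $F(x,\cdot)$ is $(C_2,\alpha_2)$-good on $B''$ for a.e.\ $x$, then $F$ is $(C_3,\alpha_3)$-good on $B$ with $1/\alpha_3=1/\alpha_1+1/\alpha_2$; this is proved by integrating the one-variable estimate in Fubini, splitting $B''$ according to whether $\|F(\cdot,y)\|_{B'}$ exceeds a threshold $s$, bounding the two pieces by $C_1(\varepsilon/s)^{\alpha_1}$ and — using a slice on which $\|F\|$ is nearly maximal — by $C_2(s/\|F\|_B)^{\alpha_2}$, and optimizing in $s$. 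The self-contained claim then follows by induction on $d$: the base case $d=1$ is the previous paragraph (with $q=j_1\le l$, giving exponent $\ge1/l$); for the inductive step, slice off the last coordinate $y_d$ — for every $y'$ the function $y_d\mapsto g(y',y_d)$ has $|\partial_{y_d}^{j_d}|\ge\kappa_1/2$ and so is $(C_l,1/l)$-good in $y_d$, while for every $y_d$ the restriction to $\{y_d=\text{const}\}$ still has nonvanishing pure partials $\partial_{y_i}^{j_i}$ of order $\le l$ for $i\le d-1$, hence is $(C_{d-1,l},1/((d-1)l))$-good by the inductive hypothesis; the slicing lemma then gives $(C_{d,l},1/(dl))$-good. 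Translating back to the original coordinates and undoing the normalization, every linear combination of $1,f_1,\dots,f_n$ is $(C,1/(dl))$-good on $V$, with $C$ and $V$ depending only on $\f$, $\x_0$ and $l$.

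The main obstacle is the uniformity in the first step over the compact but genuinely varying sphere of coefficient vectors, together with the fact that one cannot collapse to a single distinguished direction: a $C^l$ map with one large directional derivative need not have a good-constant independent of the map — for instance $\f=(x_1,\sin(Nx_2))$ is $1$-nondegenerate wherever $\cos(Nx_2)\ne0$, yet the good-constant for linear combinations of $1,x_1,\sin(Nx_2)$ must grow with $N$. It is precisely the need to produce $d$ linearly independent good directions simultaneously for every coefficient vector, together with the Fubini bookkeeping of the sup-norms $\|F(\cdot,y)\|$ across slices, that forces the exponent down from $1/l$ in one variable to $1/(dl)$ in $d$ variables.
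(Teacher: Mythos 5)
The paper offers no proof of this proposition; it is recalled verbatim from \cite{KM} and used as a black box, so what you are really doing is reconstructing the Kleinbock--Margulis argument. The global architecture you describe — reduce to a uniform statement over the unit sphere of coefficient vectors, choose $d$ linearly independent directions in which some directional derivative of order $\le l$ is uniformly nonzero, pass to coordinates in which these become the axes, prove a one-variable estimate, and iterate via a Fubini/slicing lemma — is indeed the right shape.

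However, your ``core one-variable input'' is false as stated, and the ``self-contained claim'' built on it inherits the error. You assert that $|h^{(q)}|\ge\delta>0$ on an interval already makes $h$ $(C_q,1/q)$-good with $C_q$ depending only on $q$, ``in particular not on $\delta$, by scaling invariance.'' Take $q=1$ and $h(x)=e^{Nx}$ on $B=(0,1)$, so $|h'|\ge N$. With $\varepsilon=e^{N/2}$ one has $\{x\in B:|h(x)|<\varepsilon\}=(0,1/2)$ of measure $1/2$, while $C\,(\varepsilon/\|h\|_{B})\,|B|=C\,e^{-N/2}\to 0$; so no $N$-independent $C$ can work. The scaling argument does not save you: replacing $h$ by $\lambda h$ normalizes $\inf_I|h^{(q)}|$, but the scale-invariant quantity $\sup_I|h^{(q)}|/\inf_I|h^{(q)}|$ (here $e^{N}$) is unchanged, and this is precisely what a constant $C$ must depend on. Your intended Remez-type inequality $\sup_B|h|\le A_q(|B|/|E|)^q\sup_E|h|$ fails for the same $h$ (take $E$ the left half of $B$): iterated Rolle does bound the \emph{number} of components of $\{|h|<\varepsilon\}$, but gives no control of their lengths relative to $\|h\|_{B}$ when $h^{(q)}$ varies wildly across $B$.

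The way \cite{KM} avoids this is that the one-variable (and then $d$-variable) goodness lemma carries an extra hypothesis bounding the higher derivatives of $h$ from above in terms of $\inf|h^{(q)}|$ and the ball radius, so that on the ball under consideration $h$ is genuinely polynomial-like; the neighborhood $V$ of $\x_0$ is then shrunk so that these upper bounds hold for every $g_c$ uniformly over the compact sphere of $c$'s, using continuity of the derivatives of $\f$ near $\x_0$. This shrinking is not merely a bookkeeping device inside the compactness argument, as your write-up suggests — it is the step that makes the one-variable estimate true at all, and it is where the dependence of $V$ and $C$ on $\f$ and $\x_0$ really enters. With that correction (state the one-variable lemma with the derivative-ratio, or upper-bound, hypothesis and make $V$ small enough to enforce it), the rest of your outline — the $d$ independent good directions, the linear change of coordinates, the induction via slicing with exponent $1/(dl)$ — goes through.
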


 \subsection{Quantitative Nondivergence in $K_S^m$}         
 In the following discussion we assume,
	\begin{itemize}
		\item $\mathcal D$ is an integral domain, that is, a  commutative
		ring with
		$1$ and without zero divisors;
		\item $K$ is the quotient field of $\mathcal D$;
		\item  ${\mathcal R}$ is a commutative ring containing
		${K}$ as a subring.
	\end{itemize}	
       If $\Delta$ is a $\mathcal D$-submodule of $\mathcal{R}^m$, we denote by $K\Delta$ (resp. $\mathcal{R}\Delta$) its $K$-(resp.$\mathcal{R}$-) linear span inside $\mathcal{R}^m$, and define the rank of $\Delta$ by
$$\rk(\Delta)\colonequals \dim_K(K\Delta).$$
 \noindent
  We borrow the following notation from \cite{KT} $ \S 6.3$. Denote by $ \GL(m, \mathcal{R})$ the group of $m \times m$ invertible matrices with entries in $\mathcal{R}$ and set
 $$\mathfrak M({\mathcal R},{\mathcal D}, m)\colonequals \{g\Delta~|~g\in \GL(m, \mathcal{R}), \Delta\text{ is a submodule of } \mathcal{D}^m\},$$ and 
 $$\mathfrak P({\mathcal D},m)\colonequals \text{ the set of all nonzero primitive submodules of } \mathcal{D}^m. $$
 A function $\nu : \mathfrak M({\mathcal R},{\mathcal D}, m) \to \mathbb{R}_{+}$ is norm-like if the following three properties hold:
 \begin{enumerate}
 \item[(N1)] For any $\Delta, \Delta' \in \mathfrak M({\mathcal R},{\mathcal D}, m)$ with $\Delta' \subset \Delta$ and $\rk(\Delta') = \rk(\Delta)$ one has $\nu(\Delta') \geq \nu(\Delta)$;
 \item[(N2)] there exists $C_{\nu} > 0$ such that for any $\Delta \in \mathfrak M({\mathcal R},{\mathcal D}, m)$ and any $\gamma \notin \mathcal{R}\Delta$ one has $\nu(\Delta + \mathcal{D}\gamma) \leq C_{\nu} \nu(\Delta)\nu(\mathcal D \gamma)$;
 \item[(N3)] for every submodule $\Delta$ of $\mathcal{D}^m$, the function $\GL(m, \mathcal{R}) \to \mathbb{R}_{+},\ g \to \nu(g\Delta)$, is continuous.
 \end{enumerate}
	
 We will state the following Theorem in $\S6.3$ of \cite{KT} (also Theorem in \S $4.4$ of \cite{DKGT}). 
	
 \begin{theorem}
 \label{QND}
	Let $X$ be a metric space, 		$\mu$ a  uniformly Federer measure  on $X$, and let ${\mathcal D}\subset {K}
		\subset {\mathcal R}$ be as above,
		${\mathcal R}$ being a topological ring. For  $m\in \N$, let  a
		ball $B = B(x_0,r_0)\subset X$ and a continuous map $h:\tilde B \to
		\GL(m,{\mathcal R})$
		be given, where $\tilde B$ stands for $B(x_0,3^mr_0)$. Let $\nu$ be a
		norm-like function on $\mathfrak M({\mathcal R},{\mathcal D},m)$.
		For any $\Delta\in \mathfrak P({\mathcal D},m)$ denote by $\psi_\Delta$ the
		function
		$x\mapsto \nu\big(h(x)\Delta\big)$ on  $\tilde B$.
		Now suppose for some
		$C,\alpha > 0$  one has
		
		{\rm(i)} for every $\Delta\in \mathfrak P({\mathcal D},m)$, the function $\psi_\Delta$
		is $(C,\alpha)$-good on
		$\tilde B$  with respect to
		$\mu$,
		
		{\rm(ii)}  for every $\Delta\in \mathfrak P({\mathcal D},m)$, $\|\psi_\Delta\|_{\mu,B}
		\ge \rho$,
		
		{\rm(iii)} $\forall\,x\in \tilde B\, \cap\, \supp\mu,\quad\#\big\{\Delta\in\mathfrak P({\mathcal D},m)\bigm|
		\psi_\Delta(x) <
		\rho \big\} < \infty. $
		
		\noindent Then 
		for any  positive $\varepsilon\le \rho$ one has
		$$
		\mu\left(\left\{x\in B \left| \nu \big(h(x)\gamma\big) <  \frac{\varepsilon}{C_{\nu}}
		\text{ for \ }\text{some }\gamma\in {\mathcal D}^m\setminus \{0\}
		\right. \right\}\right)\le mC
		\big(N_{X}D_{\mu}^2\big)^m
		\left(\frac\varepsilon\rho \right)^\alpha
		\mu(B).
		$$
	\end{theorem}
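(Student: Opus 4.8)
The statement coincides with the quantitative nondivergence theorem of Kleinbock and Tomanov — the module-theoretic generalization of the Kleinbock–Margulis estimate — so the plan is to reproduce that argument; I sketch the main steps. Write $\psi_\Delta(x)=\nu(h(x)\Delta)$ as in the statement. First I would reduce to \emph{primitive rank-one} submodules: if $x$ satisfies $\nu(h(x)\gamma)<\varepsilon/C_\nu$ for some $\gamma\in\mathcal D^m\setminus\{0\}$, let $\Delta\in\mathfrak P(\mathcal D,m)$ be the rank-one primitive submodule containing $\mathcal D\gamma$; since $\mathcal D\gamma\subseteq\Delta$ have equal rank, (N1) gives $\psi_\Delta(x)\le\nu(h(x)\mathcal D\gamma)=\nu(h(x)\gamma)<\varepsilon/C_\nu$. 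So it is enough to bound $\mu\bigl(\{x\in B:\psi_\Delta(x)<\varepsilon/C_\nu\text{ for some primitive }\Delta\text{ of rank }1\}\bigr)$.

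\emph{Induction on rank.} The heart is an induction over the rank of primitive submodules, carried out on nested sub-balls of $\tilde B$. The inductive claim is: for a primitive submodule $\Delta_0$ of rank $j$ (allowing $\Delta_0=\{0\}$, $j=0$) and a ball $B'\subseteq\tilde B$ on which $\psi_{\Delta_0}<\rho$ identically — a vacuous condition when $j=0$ — one has
\[
\mu\bigl(\{x\in B':\exists\,\Delta\in\mathfrak P(\mathcal D,m),\ \Delta_0\subsetneq\Delta,\ \rk\Delta=j+1,\ \psi_\Delta(x)<\varepsilon/C_\nu\}\bigr)\le(m-j)\,C\,(N_XD_\mu^2)^{m-j}(\varepsilon/\rho)^\alpha\,\mu(B').
\]
Taking $j=0$, $\Delta_0=\{0\}$, $B'=B$ then yields the theorem. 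I would run this downward in the codimension $m-j$: the case $j=m$ is trivial since no admissible $\Delta$ exists. For the inductive step, with $\Delta_0$ and $B'$ fixed, for each candidate rank-$(j+1)$ primitive overmodule $\Delta$ and each $x$ in the target set one distinguishes whether $\psi_\Delta$ stays below $\rho$ on an entire sub-ball around $x$ or not. In the first case $\Delta$ takes over the role of $\Delta_0$ one rank higher and I invoke the inductive hypothesis on that sub-ball — here (N2) is the crucial input, since it bounds $\nu(h(x)(\Delta+\mathcal D\gamma))$ in terms of $\psi_\Delta(x)$ and $\nu(h(x)\gamma)$, propagating the smallness. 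In the second case $\|\psi_\Delta\|_{\mu,B''}\ge\rho$ on a suitable dilate $B''$, so the $(C,\alpha)$-good hypothesis (i) gives the local bound $\mu(\{x\in B'':\psi_\Delta(x)<\varepsilon\})\le C(\varepsilon/\rho)^\alpha\mu(B'')$, and the $D$-Federer property of $\mu$ lets one move between $B''$ and its sub-balls with bounded loss. Property (N3) ensures all the sets involved are measurable and the sup-norms $\|\psi_\Delta\|_{\mu,\cdot}$ behave well; the finiteness hypothesis (iii) guarantees that at each stage only finitely many submodules $\Delta$ are relevant.

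\emph{Summing up via Besicovitch.} The local estimates must finally be combined over the relevant submodules at each rank level, and this is where the Besicovitch property of $X$ enters: the bad set is covered by balls centered at its points, each associated to a single dominant submodule, and passing to a subfamily of multiplicity at most $N_X$ converts the local bounds into a global one; iterating through the $m$ rank levels produces the factor $(N_XD_\mu^2)^m$ and the leading $mC$. \textbf{The main obstacle} is exactly this combinatorial bookkeeping — organizing, at each rank level, the finitely many relevant primitive submodules so that the balls in the Besicovitch subfamily are charged to compatible submodules, and verifying that the recursion terminates because the rank strictly increases at each descent. Once the ring-theoretic substitutions are in place — replacing ``covolume of a sublattice'' by a norm-like $\nu$ satisfying (N1)--(N3), and replacing the automatic finiteness of short sublattices by hypothesis (iii) — this goes through exactly as in \cite{KT}, \S 6.3, and \cite{DKGT}, \S 4.4.
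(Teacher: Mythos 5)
The paper does not prove Theorem~\ref{QND}: it simply cites it as Theorem~6.3 of \cite{KT} (equivalently, \S4.4 of \cite{DKGT}), so there is no ``paper's own proof'' to compare against. Your sketch correctly identifies the source and the strategy of that source --- reduction to rank-one primitive submodules via (N1), an induction keyed to flags of primitive submodules using (N2) to propagate smallness, the $(C,\alpha)$-good and Federer properties for the local estimates, and a Besicovitch covering to globalize --- so at the level of identifying the argument you are on track.

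That said, the inductive statement you write down does not obviously close on itself. You posit that, given a rank-$j$ frame $\Delta_0$ with $\psi_{\Delta_0}<\rho$ on $B'$, one bounds the measure of $\{x\in B':\exists\,\Delta\supsetneq\Delta_0,\ \rk\Delta=j+1,\ \psi_\Delta(x)<\varepsilon/C_\nu\}$. In the inductive step you then say: if $\psi_\Delta<\rho$ on a sub-ball, let $\Delta$ play the role of $\Delta_0$ and apply the hypothesis. But the hypothesis applied to $\Delta$ bounds the set where some rank-$(j+2)$ overmodule $\Delta'\supsetneq\Delta$ has $\psi_{\Delta'}<\varepsilon/C_\nu$; it says nothing about $\psi_\Delta$ itself being small, which is what you are trying to control on that sub-ball. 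The actual Kleinbock--Margulis/Kleinbock--Tomanov induction is organized differently: one associates to each bad point a maximal flag of primitive submodules with $\psi<\rho$, locates the ``last'' level of the flag where the sup condition (ii) forces $\psi$ to exceed $\rho$ nearby, applies $(C,\alpha)$-goodness there, and uses (N2) to transfer the smallness of the rank-one $\nu(h(x)\gamma)$ through the flag; the Besicovitch and Federer constants then accumulate once per flag level, giving $m\,C\,(N_XD_\mu^2)^m$. Since you explicitly defer the details to \cite{KT} and \cite{DKGT}, this is a presentational imprecision rather than a fatal error, but as written the inductive hypothesis would not deliver the bound you need without the flag-and-marking mechanism.
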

	A few notation are in place before stating the next theorem. Define the \textit{content} of a point $\y=(\y^\sigma)\in K_S^m$ as $c(\y)\colonequals\prod_{\sigma\in S} \Vert\y^\sigma\Vert$. Note that $c(\y)\leq \Vert \y\Vert^d$ for any $\y\in K_S^m$. For any $g\in GL(m, K_S)$, we define $\delta(g\K^m)\colonequals\min\{c(\y) ~|~ \y\in g\K^m\setminus \{0\}\}$. Also, we will have  $\cov(g\Delta)$ to be the covolume of the lattice $g\Delta$ in $R=K_S(g\Delta)$ with respect to the normalized volume on $R$. We refer reader to $\S 5$ of \cite{DKGT} for useful results about discrete submodules of $K_S^m$. As a consequence  Theorem \ref{QND} we have the following Theorem in \S 6.3 of \cite{DKGT}. 
   \begin{theorem}
   \label{QND2}
		Let $X$  
		be a Besicovitch metric space,
		$\mu$ a  Federer measure  on $X$, and let
		as above be the set of infinite places of $K$. For  $m\in \N$, let  a
		ball $B = B(x_0,r_0)\subset X$ and a continuous map $h:\tilde B
		\to \GL(m,K_S)$ be given, where $\tilde B$ stands for
		$B(x_0,3^mr_0)$.
		Now, suppose that for some
		$C,\alpha > 0$ and $0 < \rho  < 1$  one has
		
		{\label{rm1}\rm(i)} for every $\,\Delta\in \mathfrak P({\mathcal{O}}_K,m)$, the function
		$\cov\big(h(\cdot)\Delta\big)$ is $(C,\alpha)$-good \ on $\tilde B$  with
		respect to $\mu$;
		
		{\label{rm2}\rm(ii)}  for every $\,\Delta\in \mathfrak P({\mathcal{O}}_K,m)$,
		$\sup\limits_{x\in B\cap\, \supp\mu}\cov\big(h(x)\Delta\big) \ge \rho$.
		
		Then 
		for any  positive $ \varepsilon\le
		\rho$, one has
		$$
		\mu\left(\big\{x\in B\bigm| \delta \big(h(x){\mathcal{O}_K}^m\big) < \varepsilon
		\big\}\right)\le mC \big(N_{X}D_{\mu}^2\big)^m
		\left(\frac{\varepsilon\sqrt{D_K}} {\rho} \right)^\alpha \mu(B).\
		$$
	\end{theorem}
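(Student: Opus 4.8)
The plan is to deduce this statement from the general quantitative nondivergence result, Theorem \ref{QND}, by choosing the triple $(\mathcal D,K,\mathcal R)$ and the norm-like function $\nu$ appropriately. Take $\mathcal D=\mathcal O_K$, $K$ the number field, and $\mathcal R=K_S$ (a topological ring containing $K$ as a subring via the diagonal embedding $\tau$). The natural candidate for $\nu$ is the normalized covolume function $\nu(g\Delta)\colonequals\cov(g\Delta)$, where $g\Delta$ is a discrete submodule of $K_S^m$ and the covolume is computed inside $R=K_S(g\Delta)$ with respect to the normalized volume on that subspace. So the first step is to verify that $\cov$ satisfies (N1), (N2) and (N3). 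Property (N3) — continuity of $g\mapsto\cov(g\Delta)$ — is routine from the definition of covolume as a determinant-type quantity. Property (N1) — that passing to a finite-index (equivalently, full-rank) submodule does not decrease covolume — follows because $\cov(\Delta')=[\Delta:\Delta']\cdot\cov(\Delta)$ up to the normalization and the index is at least $1$; here one must invoke the structure results for discrete $\mathcal O_K$-submodules of $K_S^m$ collected in \S5 of \cite{DKGT}. Property (N2) — the submultiplicativity $\cov(\Delta+\mathcal O_K\gamma)\le C_\nu\cov(\Delta)\cov(\mathcal O_K\gamma)$ for $\gamma\notin K_S\Delta$ — is the arithmetic analogue of the fact that the volume of a parallelepiped is at most the product of the side lengths (a determinant-Hadamard inequality); the constant $C_\nu$ will depend only on $K$ and is where the discriminant factor $\sqrt{D_K}$ enters, since $\cov(\mathcal O_K\gamma)$ is comparable to $\sqrt{D_K}\,c(\gamma)$ rather than to $c(\gamma)$ itself.

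Once $\cov$ is known to be norm-like, the second step is to translate the hypotheses. Condition (i) of Theorem \ref{QND2} is exactly condition (i) of Theorem \ref{QND} for $\psi_\Delta(x)=\cov(h(x)\Delta)$. Condition (ii) of Theorem \ref{QND2} says $\sup_{x\in B\cap\supp\mu}\cov(h(x)\Delta)\ge\rho$, which is precisely $\|\psi_\Delta\|_{\mu,B}\ge\rho$, i.e. condition (ii) of Theorem \ref{QND}. The remaining hypothesis (iii) of Theorem \ref{QND} — local finiteness of $\{\Delta:\psi_\Delta(x)<\rho\}$ for each fixed $x$ — is \emph{not} assumed in Theorem \ref{QND2}, so the third step is to show it holds automatically: for a fixed $x$, $h(x)\mathcal O_K^m$ is a fixed discrete $\mathcal O_K$-module, and a primitive submodule $\Delta$ with $\cov(h(x)\Delta)<\rho<1$ must be spanned by short vectors of $h(x)\mathcal O_K^m$; since there are only finitely many lattice vectors below any bounded content in a discrete module, there are only finitely many such $\Delta$. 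This again uses the discreteness/finiteness facts from \S5 of \cite{DKGT}.

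With all three hypotheses of Theorem \ref{QND} verified, applying it gives
$$
\mu\left(\left\{x\in B \left|~ \cov\big(h(x)\gamma\big)<\tfrac{\varepsilon}{C_\nu}\ \text{for some }\gamma\in\mathcal O_K^m\setminus\{0\}\right.\right\}\right)\le mC\big(N_X D_\mu^2\big)^m\left(\tfrac{\varepsilon}{\rho}\right)^\alpha\mu(B).
$$
The final step is to convert the condition "$\cov(h(x)\gamma)$ small for some $\gamma$" into "$\delta(h(x)\mathcal O_K^m)$ small". For a rank-one module $\mathcal O_K\gamma$ one has $\cov(h(x)\mathcal O_K\gamma)$ comparable to $\sqrt{D_K}\,c(h(x)\gamma)$ up to a bounded constant, so a vector $\y=h(x)\gamma$ realizing $\delta(h(x)\mathcal O_K^m)=\min c(\y)<\varepsilon$ yields $\cov(h(x)\mathcal O_K\gamma)<C_\nu'\varepsilon\sqrt{D_K}$; absorbing constants and replacing $\varepsilon$ by $\varepsilon\sqrt{D_K}$ in the displayed bound produces the asserted inequality. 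I expect the \textbf{main obstacle} to be the careful bookkeeping in Step 1, specifically verifying (N2) with an explicit constant and pinning down exactly how the discriminant $\sqrt{D_K}$ propagates through the comparison between covolume of a rank-one $\mathcal O_K$-module and content — this is the only place where genuinely number-theoretic (as opposed to purely geometric) input is needed, and it relies essentially on the normalization conventions and lemmas of \cite{DKGT}~\S5.
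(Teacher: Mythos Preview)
The paper does not actually prove Theorem \ref{QND2}: it is merely \emph{stated} as ``the following Theorem in \S6.3 of \cite{DKGT}'', with the remark that it is a consequence of Theorem \ref{QND}. So there is no proof in the paper to compare against --- the authors simply quote the result from the literature.

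Your proposal is a correct outline of the argument one finds in \cite{DKGT}: take $\mathcal D=\mathcal O_K$, $\mathcal R=K_S$, set $\nu=\cov$, verify (N1)--(N3), check that hypothesis (iii) of Theorem \ref{QND} holds automatically by discreteness, and then pass from the covolume of a rank-one $\mathcal O_K$-module to the content $c(\cdot)$ picking up the factor $\sqrt{D_K}$. The step you flag as the main obstacle (the precise relation $\cov(\mathcal O_K\gamma)=\sqrt{D_K}\,c(\gamma)$ and the constant in (N2)) is exactly where the number-theoretic normalization enters, and is handled in \S5--6 of \cite{DKGT} as you anticipate. In short: your sketch is right and matches the cited source; the present paper simply does not reproduce that argument.
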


	\subsection{Proof of Theorem \ref{fWF}}
	
   \subsubsection*{Proof of Corollary \ref{WF} assuming Theorem \ref{fWF}}
   Note that $\mu$ is friendly if and only if each $\mu_\sigma$ is Federer, $(\mathbf{Id}, \mu)$ is  nonplanar and $\forall\ \sigma\in S, (\mathbf{Id},\mu_\sigma)$ is good for $\mu_\sigma$-a.e. points. Hence, Corollary \ref{WF} follows from Theorem \ref{fWF}.

   \begin{proposition}\label{prop1}
   	Let $X$ be a Besicovitch metric space and $\mu$ be a Federer measure on $X$. Denote $\tilde B\colonequals B(x,3^{m+1}r)$. Suppose we are given a continuous function $\f : X\to K_S^{m}$ and $C,\alpha >0$ with the following properties:\\
   	{\label{b1}\rm{(i)}}~$x\mapsto \cov(g_t u_{\f(x)}\Delta)$ is $(C,\alpha)$-good with respect to $\mu\text{ in }\tilde B$ for all $\Delta \in \mathfrak P({\mathcal{O}_K},m+1),$\\
   	{\label{b3}\rm{(ii)}} There exists $c>0$ and a sequence $t_i\to\infty$ such that, for any $\Delta \in \mathfrak P({\mathcal O}_K,m+1) $ one has 
   	\begin{equation}
   	\label{A2}
   	\sup_{x\in B\cap\,\supp\mu}\cov (g_{t_i}u_{\f(x)}\Delta)\geq c.
   	\end{equation}
   	Then, $\mu\{x\in B~|~ \f(x) \text{ is singular}\}=0$
   \end{proposition}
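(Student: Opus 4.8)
The plan is to deduce the statement from the Dani correspondence (Theorem \ref{Dani}) together with the quantitative nondivergence estimate of Theorem \ref{QND2}, applied with the map $h(x) = g_t u_{\f(x)}$ acting on $\mathcal{D}^{m+1} = \mathcal{O}_K^{m+1}$ and the norm-like function $\nu = \cov$. First I would recall that, by Theorem \ref{Dani}, $\f(x)$ is singular if and only if the trajectory $\{g_t u_{\f(x)}\mathcal{O}_K^{m+1} : t \geq 0\}$ is divergent in $G/\Gamma$, which by Mahler's criterion (Theorem \ref{Mah}) means that for every $\varepsilon > 0$ there is $T_\varepsilon$ with $g_t u_{\f(x)}\mathcal{O}_K^{m+1} \cap B_\varepsilon = \{0\}$ for all $t \geq T_\varepsilon$; equivalently $\delta(g_t u_{\f(x)}\mathcal{O}_K^{m+1}) \geq \varepsilon^{d}$ (up to the harmless comparison $c(\y) \leq \Vert\y\Vert^d$) for $t$ large. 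So if $\f(x)$ is singular, then for every small $\varepsilon > 0$ we have $\delta(g_{t_i}u_{\f(x)}\mathcal{O}_K^{m+1}) \geq \varepsilon$ for all sufficiently large $i$ along the fixed sequence $t_i \to \infty$.

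Next I would fix $\varepsilon > 0$ and for each $i$ apply Theorem \ref{QND2} on the ball $B$ with $h = g_{t_i}u_{\f(\cdot)}$: hypothesis (i) of that theorem is exactly our hypothesis (i) (the $(C,\alpha)$-goodness of $x \mapsto \cov(g_{t_i}u_{\f(x)}\Delta)$ on $\tilde B$), and hypothesis (ii) with $\rho = \min(c, 1/2)$, say, is supplied by our hypothesis (ii), namely $\sup_{x \in B \cap \supp\mu}\cov(g_{t_i}u_{\f(x)}\Delta) \geq c$ for all $\Delta \in \mathfrak{P}(\mathcal{O}_K, m+1)$, uniformly in $i$. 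Theorem \ref{QND2} then yields, for every $\varepsilon \leq \rho$,
\[
\mu\bigl(\{x \in B : \delta(g_{t_i}u_{\f(x)}\mathcal{O}_K^{m+1}) < \varepsilon\}\bigr) \leq (m+1)C\bigl(N_X D_\mu^2\bigr)^{m+1}\left(\frac{\varepsilon\sqrt{D_K}}{\rho}\right)^\alpha \mu(B),
\]
with a constant on the right-hand side that is \emph{independent of $i$}. Passing to the complement, the set $E_i^\varepsilon := \{x \in B : \delta(g_{t_i}u_{\f(x)}\mathcal{O}_K^{m+1}) \geq \varepsilon\}$ has $\mu$-measure at least $\bigl(1 - (m+1)C(N_X D_\mu^2)^{m+1}(\varepsilon\sqrt{D_K}/\rho)^\alpha\bigr)\mu(B)$.

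Finally I would assemble these bounds. Let $S_B := \{x \in B : \f(x)\text{ is singular}\}$; by the first paragraph, $S_B \subseteq \bigcup_{N \geq 1}\bigcap_{i \geq N} E_i^\varepsilon$ for each fixed $\varepsilon > 0$ (singularity forces the trajectory to eventually avoid the $\varepsilon$-neighborhood along $t_i$). Since $\mu(\bigcap_{i \geq N} E_i^\varepsilon) \leq \mu(B)$ and $\limsup_i \mu(B \setminus E_i^\varepsilon) \leq (m+1)C(N_X D_\mu^2)^{m+1}(\varepsilon\sqrt{D_K}/\rho)^\alpha\mu(B)$, a Fatou/Borel–Cantelli-type argument gives $\mu(\bigcup_N \bigcap_{i\geq N} E_i^\varepsilon) \geq \mu(B) - $ (the same error term), hence more to the point that the portion of $B$ \emph{not} in infinitely many $E_i^\varepsilon$ is large; combining over a sequence $\varepsilon \to 0$ shows $\mu(S_B) \leq (m+1)C(N_X D_\mu^2)^{m+1}(\varepsilon\sqrt{D_K}/\rho)^\alpha\mu(B)$ for every $\varepsilon$, whence $\mu(S_B) = 0$. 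As this holds for every ball $B$ in a countable cover of $X$ (using that $X$ is Besicovitch and $\mu$ Federer, so the Federer/good hypotheses localize), we conclude $\mu\{x : \f(x)\text{ is singular}\} = 0$.

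The step I expect to be the main obstacle is the clean bookkeeping in the last paragraph: translating the $\varepsilon$-$t_i$ characterization of divergence into a nested $\limsup$/$\liminf$ of the sets $E_i^\varepsilon$ and making sure the uniform-in-$i$ constant from Theorem \ref{QND2} is genuinely available (this is why hypothesis (ii) is stated with a single $c$ valid along the whole sequence $t_i$). A secondary technical point is the passage between $\delta(\cdot)$, which is phrased via the content $c(\y) = \prod_\sigma \Vert\y^\sigma\Vert$, and the sup-norm neighborhoods $B_\varepsilon$ appearing in Mahler's criterion — one must check that avoiding a content-$\varepsilon$ neighborhood and avoiding a norm-$\varepsilon$ neighborhood are equivalent up to replacing $\varepsilon$ by a fixed power, which follows from $c(\y) \leq \Vert\y\Vert^d$ together with the reverse estimate on $g_t$-images using the explicit form of $g_t$.
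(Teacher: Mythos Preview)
Your overall strategy is the same as the paper's: feed the map $h(x)=g_{t_i}u_{\f(x)}$ into Theorem~\ref{QND2}, use hypotheses (i)--(ii) to get a bound of the shape $\mu\{x\in B:\delta(g_{t_i}u_{\f(x)}\mathcal{O}_K^{m+1})<\varepsilon\}\le E\varepsilon^\alpha$ uniformly in $i$, then combine with Dani correspondence and let $\varepsilon\to 0$. That part is fine.

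However, you have Mahler's criterion reversed, and this propagates through the whole final step. Divergence of $\{g_t u_{\f(x)}\Gamma\}$ means the trajectory leaves every compact set; by Theorem~\ref{Mah}, compact sets are exactly those whose lattices \emph{avoid} a neighborhood of $0$. Hence a divergent trajectory is one that eventually \emph{acquires} short vectors: for every $\varepsilon>0$ there is $T_\varepsilon$ with $g_t u_{\f(x)}\mathcal{O}_K^{m+1}\cap B_\varepsilon\neq\{0\}$ for all $t\ge T_\varepsilon$, i.e.\ $\delta(g_t u_{\f(x)}\mathcal{O}_K^{m+1})<\varepsilon$ eventually. You wrote $\cap B_\varepsilon=\{0\}$ and $\delta\ge\varepsilon$, which is the non-divergent case. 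Consequently your sets $E_i^\varepsilon=\{x:\delta\ge\varepsilon\}$ are the \emph{good} sets, your inclusion $S_B\subseteq\liminf_i E_i^\varepsilon$ is false (the correct inclusion is $S_B\subseteq\liminf_i(B\setminus E_i^\varepsilon)$), and the paragraph assembling the bounds does not yield an upper bound on $\mu(S_B)$ as written.

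Once you flip the inequality, the endgame is cleaner than what you wrote: $S_B\subseteq\bigcup_N\bigcap_{i\ge N}\{x\in B:\delta(g_{t_i}u_{\f(x)}\mathcal{O}_K^{m+1})<\varepsilon\}$, and since each set in the intersection has $\mu$-measure $\le E\varepsilon^\alpha$ by Theorem~\ref{QND2} (uniformly in $i$), one has $\mu(S_B)\le E\varepsilon^\alpha$ directly; now send $\varepsilon\to 0$. This is exactly the paper's argument. Your worry about comparing the content $c(\cdot)$ with the sup-norm is not needed here: Theorem~\ref{QND2} is already stated in terms of $\delta$, and the Dani correspondence (Theorem~\ref{Dani}) already encodes the translation between divergence and the existence of short vectors.
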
                     
   \begin{proof}
   	We will check that the map $h=g_tu_{\f}$ satisfies the assumptions of Theorem \ref{QND2} with respect to the measure $\nu=\f_*\mu|_B$ where, condition (\rm{i}) of this proposition is same as the condition (\rm{i}) of Theorem \ref{QND2}. Then condition (\rm{ii}) of this proposition gives (\rm{ii}) of Theorem \ref{QND2}, for all integers $t_i$. Therefore by Theorem \ref{QND2} for any $1>\varepsilon >0$ we have that 
	\begin{align}
	\label{borel_cantelli}
	&\mu\left(\bigg\{x\in B\left | \delta (g_{t_i} u_{\f(x)}\mathcal O_K^{m+1})<\varepsilon c\right.\bigg\}\right)\\
	&\leq (m+1)C(N_X D_{\mu}^2)^{m+1}(\sqrt D_K)^\alpha\varepsilon^\alpha \mu(B)\\
	&= E  \varepsilon^\alpha.
	\end{align}\\
	 From Theorem \ref{Dani} we have $$\mu\{x\in B~|~\f(x) \text{ is singular }\}\subseteq \mu\{x\in B~|~ g_t u_{\f(x)}\Gamma,\ t\geq 0 \text{ is divergent} \}.$$ We want to show that for any $\varepsilon>0$, we have $\mu\{x\in B~|~ \f(x) \text{ is singular}\}\leq \varepsilon^\alpha$. Let $\varepsilon>0$ be given. Then for any $n\in \N$, there exists a set $B_n \subset \{x\in B~|~ \f(x) \text{ is singular}\}$ such that $ \mu ( \{x\in B ~|~ \f(x) \text{ is singular}\})\leq \mu(B_n)+\frac{1}{n}$ and there exists a $t_i$ such that $B_n\subset \bigg\{x\in B\left |\ \delta (g_{t_i} u_{\f(x)}\mathcal O_K^{m+1})<\varepsilon c\right.\bigg\} $. Hence, for every $n\in\N$ and $\varepsilon>0$  we have 
	 $$\mu ( \{x\in B ~|~ \f(x) \text{ is singular}\})\leq E \varepsilon^\alpha+\frac{1}{n},$$ and we conclude.	
   \end{proof}

   \subsubsection*{Proof of theorem \ref{fWF}}
   We will let $\mathcal{R}=K_S$ and $\mathcal{D}=\mathcal{O}_K.$ Let us denote the set of rank $j$  submodules of $\mathcal{D}^{m+1}$ as $\mathcal{S}_{m+1, j}$. For any nonzero submodule $\Delta\in\mathcal{S}_{m+1, j}$, there exists an element $\bw$ of $\bigwedge^j(\mathcal{D}^{m+1})$ such that $\cov(\Delta)\geq (\sqrt {D_K})^j c(\bw)$ and $\cov(g_t u_{\x}\Delta)\geq (\sqrt {D_K})^j c(g_t u_{\x}\bw)$, where $\x\in K_S^m$.  We take $\be_0,\be_1,\cdots,\be_m\in\mathcal{R}^{m+1}$ as the standard basis of $\mathcal R^{m+1}$ over $\mathcal R$ where, $\{\be_i^\sigma\}$ forms the standard basis of $K_\sigma^m$ over $K_\sigma$. Then the standard basis of $\bigwedge^j\mathcal{R}^{m+1}$ will be  $\{\be_I=\be_{i_1}\wedge\cdots\wedge \be_{i_j}~|I\subset\{0,\cdots,m\}\text{ and } i_1<i_2<\cdots<i_j \}$. For an element $\a=\sum a_I \be_I$, we define $\Vert \a\Vert:= \max_{I} \Vert a_I\Vert.$ We can write any $\bw\in\mathcal{D}^{m+1}$ as $\bw=\sum w_I\be_I$, where $w_I\in\mathcal{D}$. 

  Let us note the action of the unipotent flows on the coordinates of $\bw$. Recall, for $\x=(x_1,\cdots,x_m)$ we have $u_{\x}=\begin{bmatrix} 1 &\x\\
  0& I_{m}\end{bmatrix}$ where, the $\sigma$ component is $u_{\x^\sigma}=\begin{bmatrix} 1 &\x^\sigma\\
0& I_{m}\end{bmatrix}$. Also, $g_t$ is taken such that the $\sigma$ component, $g_{t}^\sigma=\begin{bmatrix} e^{mt} & 0 & \cdots & 0\\0 & e^{-t} & \cdots  & 0\\ \vdots & \vdots & \vdots & \vdots\\0 & 0 & \cdots & e^{-t}
\end{bmatrix}$.
Now, observe that  $u_{\x^\sigma}$ leaves $\be_0^\sigma$ invariant and sends $\be_i^\sigma$ to $x^\sigma_i\be_0^\sigma+\be_i^\sigma$ for $i\geq 1$.
Therefore 
\[
u_{\x^\sigma}(\be_I^\sigma)=\left.
\begin{cases}
\be_I^\sigma  &\text{ if } 0\in I\\
\be_I^\sigma +\sum\limits_{i\in I} \pm x^\sigma_i\be_{I\setminus\{i\}\cup\{0\}} & \text{ if } 0\notin I.
\end{cases}
\right.
\]
 
 \noindent  
 Moreover, under the action of $g_t^\sigma$, the vectors $\be_i^\sigma$ are eigenvectors with eigenvalue $e^{-t}$ for $i\geq 1$ and $\be_0^\sigma$ is an eigenvector with eigenvalue $e^{mt}$. Therefore
\[
g_t^\sigma u_{\x^\sigma}(\be_I^\sigma)=\left.
\begin{cases}
e^{(m-j+1)t}\be_I^\sigma  &\text{ if } 0\in I\\
e^{-jt}\be_I^\sigma\pm e^{(m-j+1)t}\sum\limits_{i\in I}  x^\sigma_i\be^\sigma_{I\setminus\{i\}\cup\{0\}} & \text{ if } 0\notin I.
\end{cases}
\right.
\]

Thus, for $\bw\in\bigwedge^j(\mathcal{D}^{m+1}), \bw=\sum w_I\be_I$ with $w_I\in\mathcal{D}$ and so, we get the $\sigma$ component of $g_t u_{\x}\bw$ to be
\begin{equation}
(g_t u_\x\bw)^\sigma= e^{-jt}\sum_{\lbrace I\,|\, 0\notin I\rbrace} w_I\be_I^\sigma+e^{(m-j+1)t}\sum_{\lbrace I\,|\,0\in I\rbrace}\left(w_I+(\sum_{i\notin I}\pm w_{I\setminus\{0\}\cup\{i\}}x^\sigma_i)\right)\be_I^\sigma,
\end{equation}
 
It will be convenient for us to use the following notation, $$\bc(\bw)=\begin{pmatrix}
\bc(\bw)_0\\
\bc(\bw)_1\\
\vdots\\
\bc(\bw)_n
\end{pmatrix},$$
where, $\bc(\bw)_i=\sum\limits_{\substack{J\subset\{1,\cdots,m\} \\\# J=j-1}} w_{J\cup\{i\}}\be_J\in \bigwedge^{j-1}(V_0)$ and $V_0$ is the $\K$ submodule of $K_S^{m+1}$ generated by $\be_1,\cdots,\be_m$. Moreover, $V_0^\sigma$ will be the subspace of $K_\sigma^{m+1}$ generated by $\be_1^\sigma, \cdots, \be_m^\sigma$ so that, $V_0=\prod_{\sigma\in S} V_0^\sigma$.
We may therefore write 
\begin{equation}
(g_t u_{\x}\bw)^\sigma= e^{-jt}\sum_{\lbrace I\,|\,0\notin I\rbrace}w_I\be_I^\sigma+e^{(m-j+1)t}\left(\be_0^\sigma\wedge\sum_{i=0}^m x^\sigma_i\bc(\bw)_i\right)\\
=e^{-jt}\pi_\sigma(\bw)+e^{(m-j+1)t}\be_0^\sigma\wedge\tilde{\x}^\sigma\cdot\bc(\bw),
\end{equation}
where, $\tilde\x=(1,\x)$ and $\pi_\sigma$ is the orthogonal projection from $\bigwedge ^j(K_\sigma^{m+1})\to \bigwedge^j V_0^\sigma$. 
Hence, we have 
$$\begin{aligned}
\cov(g_t u_{\x}\Delta) &\geq (\sqrt{D_K})^m c(g_t{u_\x}\bw)\\&=\prod_{\sigma\in S}\max\bigg( e^{(m-j+1)t}\Vert\sum_{i=0}^m x_i^\sigma \bc(\bw)_i\Vert, e^{-jt} \Vert \pi_\sigma(\bw)\Vert\bigg)\\
\end{aligned}
$$
Thus, for $\x=\f(x)$,
\begin{equation}
\label{covolume}
	\begin{aligned}
	&\sup_{x=(x^\sigma)\in B\cap\, \supp\mu} \cov (g_t u_{\f(x)}\Delta)\\
	&\geq (\sqrt{D_K})^m\prod_{\sigma\in S}\max\left( e^{(m-j+1)t}\sup_{x^\sigma\in B_\sigma\cap\, \supp\mu_\sigma}\Vert \tilde\f_\sigma(x^\sigma)\cdot\bc(\bw) \Vert, e^{-jt}\Vert \pi_\sigma(\bw)\Vert\right),
	\end{aligned}
\end{equation} where, $\tilde{\f}=(1,f_1,\cdots,f_m)$ and $B=\prod B_\sigma$, $B_\sigma$ is a ball in $X_\sigma$.
Since $(\f,\mu)$ is nonplanar, for each $\sigma$, the restrictions of $1,f_{1,\sigma}, \cdots,f_{m,\sigma}$ to $B_\sigma\cap\supp\mu_\sigma$ are linearly independent. Hence, $$\sup_{x^\sigma\in B_\sigma\cap\, \supp\mu_\sigma}\Vert \tilde\f_\sigma(x^\sigma)\cdot\bc(\bw) \Vert\geq \Vert \bc(\bw)\Vert ~\forall~ \sigma\in S,$$ which guarantees condition \ref{A2} of Proposition \ref{prop1}. Since for each $\sigma\in S$, $(\f_\sigma,\mu_\sigma)$ is good for $\mu_\sigma$-almost every point, we can apply Lemma 2.2 from \cite{KT} in order to verify that condition (\rm{i}) of Proposition \ref{prop1} is satisfied. Therefore, by Proposition \ref{prop1} we can conclude the theorem.

  \section{Totally irrational singular vectors on manifolds}
  \label{sec4}
  
  \subsection{Totally irrational vectors}
  From the discussion about totally irrational vectors in Section \S \ref{DAN}, it is clear that the condition for a vector $\x\in K_S^m$ to be not totally irrational is equivalent to saying that $\Vert \q\cdot \x+q_0\Vert= 0$. This in turn implies that $\x$ lies in an affine $K$-hyperplane in $K_S^m$. In $K_S$, this means that all vectors that are not totally irrational are precisely those in $K$. Let us recall the following lemma from \cite{E}.
    \begin{lemma}
  \label{lin}
  There exists an $\varepsilon>0$ depending on the choice of the norm on $\R^2$ such that, no unimodular lattice in $\R^2$ contains two linearly independent vectors each of norm less than $\varepsilon$.
  \end{lemma}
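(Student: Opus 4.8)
The plan is to reduce this to two elementary facts. First, all norms on $\R^2$ are equivalent, so the given norm $\|\cdot\|$ is comparable to the Euclidean norm $|\cdot|$; fix a constant $C=C(\|\cdot\|)\ge 1$ with $|v|\le C\|v\|$ for all $v\in\R^2$. Second, if $\Lambda\subset\R^2$ is a lattice of covolume one and $v_1,v_2\in\Lambda$ are linearly independent, then the parallelogram they span has Euclidean area at least one. Combining these with Hadamard's inequality produces the desired $\varepsilon$, namely $\varepsilon=C^{-1}$ (or any smaller positive number).

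In detail, I would argue as follows. Suppose, for contradiction, that some unimodular lattice $\Lambda$ contains linearly independent vectors $v_1,v_2$ with $\|v_1\|,\|v_2\|<\varepsilon$. Since $v_1,v_2$ are linearly independent over $\R$ they are $\Z$-independent, so $\Z v_1+\Z v_2$ is a rank-two, hence finite-index, sublattice of $\Lambda$; its covolume equals the index times $\operatorname{covol}(\Lambda)=1$, and is therefore at least $1$. But that covolume is exactly the Euclidean area $|\det(v_1,v_2)|$ of the fundamental parallelogram, so $|\det(v_1,v_2)|\ge 1$. On the other hand, Hadamard's inequality gives $|\det(v_1,v_2)|\le |v_1|\,|v_2|\le C^2\|v_1\|\,\|v_2\|<C^2\varepsilon^2$. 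Hence $1<C^2\varepsilon^2$, which is false once $\varepsilon\le C^{-1}$. So no such pair exists, and $\varepsilon=C^{-1}$ works; this makes the dependence of $\varepsilon$ on the norm explicit, through the comparison constant $C$.

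I do not expect a genuine obstacle here, as the argument is entirely elementary; the only point deserving care is the inequality $|\det(v_1,v_2)|\ge 1$, which rests on the standard fact that the covolume of a finite-index sublattice is an integer multiple of the covolume of the ambient lattice, equal to $1$ by unimodularity. As an alternative one could invoke Minkowski's second theorem: two linearly independent lattice vectors of $\|\cdot\|$-norm $<\varepsilon$ force $\lambda_1(\Lambda)\lambda_2(\Lambda)<\varepsilon^2$, while the second theorem bounds this product below by $2/\vol(B)$ with $B$ the unit ball of $\|\cdot\|$; but the determinant computation above is shorter and self-contained, so I would present that one.
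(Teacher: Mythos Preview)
Your argument is correct and entirely self-contained. Note, however, that the paper does not actually prove this lemma: it merely recalls it from the reference \cite{E}, so there is no ``paper's own proof'' to compare against. Your determinant/Hadamard computation is the standard elementary route and would serve perfectly well as a replacement for the citation.
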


  \subsubsection*{Proof of Theorem \ref{simple} }
  If a vector belongs to $K$ then it is singular is the easy direction. Let $G'=\mathrm{\mathrm{SL}}(2,K_S)$ and $\Gamma'=\mathrm{\mathrm{SL}}(2,\K)$. To prove the other direction, we begin with recalling the identification of $G'/\Gamma'$ with the set $\Omega_{S,2}$ of discrete rank 2 $\K$ modules as described in Section \S \ref{sec2}. For a singular vector $x\in K_S$, by Dani correspondence, the trajectory of lattices $\{g_tu_x\K^2\}$ is divergent. 
  Thus, given $\varepsilon>0$, there exists a $t_\varepsilon>0$ such that 
  \begin{equation}
  \label{DIV}
  \text{for all } t\geq t_\varepsilon, \text{ there exists }\left(\begin{matrix}
  q_t\\
  p_t
  \end{matrix}\right)\in\K^2\text{ with } \Vert g_tu_x\left(\begin{matrix}
  q_t\\
  p_t
  \end{matrix}\right)\Vert<\varepsilon.
  \end{equation}
  
  As each $K_\sigma^2\simeq\R^2$, we can use the above Lemma \ref{lin} to conclude the nonexistence of two linearly independent vectors $\{v_1,\, v_2\}$ in a given lattice $g_tu_x\K^2$ having simultaneously very small norms. The identification in Section \S \ref{sec2} identifies $u_x\Gamma'$ with the $\K$ module $u_x\K^2\in\Omega_{S,2}$ for which there exists a basis $\{v,\, w\}$ such that for each $\sigma\in S$, $v^\sigma,\,w^\sigma$ form the sides of a parallelpiped with area 1 in $K_\sigma^2$.  Let $\varepsilon_\sigma>0$ be the threshold length beyond which any two linearly independent vectors $\{v^\sigma,\, w^\sigma\}$ of a lattice in $\Omega_{S,2}$ cannot coexist inside the $\varepsilon_\sigma$ ball in $K_\sigma^2\simeq\R^2$.  Now, as the number of places is finite, for any $\varepsilon<\min\{\varepsilon_\sigma\mid\,\sigma\in S\}$, continuity of the map $t\overset{\phi}{\mapsto} g_t$ along with the divergence condition in equation (\ref{DIV}) ensures that for any time $t>t_0\geq t_\varepsilon$, the nonzero integral vector $v_t=\left(\begin{matrix}
  p_t\\
  q_t
  \end{matrix}\right)$ which falls into the $\varepsilon$ ball in $K_S^2$ (centered at 0) for the time $v_{t_0}=\left(\begin{matrix}
  p_{t_0}\\
  q_{t_0}
  \end{matrix}\right)$ remains inside the ball, should necessarily be a nonzero integral multiple of $v_{t_0}$ (by Lemma \ref{lin}), \textit{i.e.}, $v_t=\lambda_t v_{t_0}$ for $0\neq\lambda_t\in\K$. Hence, $\Vert g_t u_x \left(\begin{matrix}
  q_{t_0}\\
  p_{t_0}
  \end{matrix}\right)\Vert \to_{t\to\infty} 0\implies e^t\Vert q_{t_0}x+p_{t_0}\Vert=0$ for all large enough $t>0$. This implies that $x$ belongs to $K$.

  \subsection{Inheritance in manifolds over $K_S$}
  \label{MOK}

  \noindent
  \textit{Notation:} Let $H:\,\sum_{i=1}^{m}h_i{\x_i}=h_0$ denote an affine $K$-hyperplane in $K^m$ where, $(h_0,\cdots,h_m)\in\K^{m+1}$ is a primitive lattice point. For the affine $K$-hyperplane $A\colonequals\prod_{\sigma\in S}H_\sigma$ in $K_S^m$, we let $$\vert A\vert\colonequals\Vert(h_1,\cdots,h_m)\Vert$$ be referred to as the norm of $A$.
   Let the collection of affine $K$-hyperplanes of $K_S^m$ as described in Section \S \ref{DAN} be denoted by $\mathcal A$. Note that, this is a countable collection and as a consequence of the Dirichlet's theorem, it is dense in $K_S^m$. \noindent 
   \\

   Let $\Phi\colonequals\K^m\setminus\{0\}\to \R_+$ be a proper function where, $\K^m$ is endowed with the discrete topology. For example, we may consider the norm map. We may extend $\Phi$ to all of $\K^m$ by defining $\Phi(0)=0.$ The irrationality measure function is then defined as $$\eta_{\Phi,\x}(t)\colonequals \min_{\{(q_0,\q)\in\K^{m+1}\setminus\{0\} \mid ~\Phi(\q)\leq t\}}\Vert\q\cdot\x+q_0\Vert.$$
   
   The following abstract result exhibits the existence of totally irrational singular vectors on locally closed subsets of $K_S^m$. This result is a generalization of Theorem 1.1 in \cite{KNW} to the context of number fields. The following theorem follows ideas presented in \cite{KHIN}. We call a subset $M\subset K_S^m$ \textit{locally closed} if $M=\overline{M}\cap W$ for an open set $W$ in $K_S^m$. 
   
  \begin{theorem}
  \label{singular}
  Suppose $M=\prod_{\sigma\in S}M_\sigma$ is a locally closed subset of $K_S^m$. Let $\mathcal L$ and $\mathcal L'$ be a disjoint countable collection of distinct closed subsets of $M$ such that, for each $L_i\in\mathcal L$, $A_i$ is an affine $K$-hyperplane in $K_S^m$ containing $L_i$. Assume the following holds:
  \begin{itemize}
      \item[1)] Collection $\mathcal L$ is dense in $M$. 
      \item[2)] $\mathcal L\cup\mathcal L'=\{\x\in M \mid \x\text{ is not totally irrational}.\}$ 
      \item[3)] For each $L\in\mathcal L$ and $T>0$, $L=\overline{\bigcup\limits_{\substack{L_i\in\mathcal L\\ |A_i|>T}}L\cap L_i}$.
      \item[4)]Transversality condition: For $L\in\mathcal{L}$, $L=\overline{L\setminus(\bigcup\limits_{Y\in \mathcal F}Y\cup\bigcup\limits_{Y'\in\mathcal{F}'}Y')}$, for any finite subcollection $\mathcal F,\ \mathcal{F}'$ of $\mathcal{L},\ \mathcal{L'}$ respectively, such that $L\notin\mathcal F$.
  \end{itemize}
  Then for any non-increasing function $\zeta:\R_+\to\R_+$ and an arbitrary proper function $\Phi:\K^m\setminus\{0\}\to\R_+$, there exists uncountably many totally irrational singular vectors $\x\in M$ with $\eta_{\Phi,\x}(t)\leq\zeta(t)$ for all large $t$.
  \end{theorem}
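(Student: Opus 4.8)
The plan is to run a Khintchine-style Cantor construction, i.e. to build a decreasing sequence of closed balls whose (unique) intersection point is the vector we want, and in fact to build a whole binary tree of such balls to manufacture uncountably many limit points. Fix a point $x_0\in M$ and a ball $B_0=B(x_0,r_0)$ contained in the open set $W$ with $M=\overline M\cap W$, and enumerate $\mathcal L\cup\mathcal L'=\{Z_1,Z_2,\dots\}$. I would construct balls $B_0\supset B_1\supset\cdots$, $B_n=B(\x_n,r_n)$ with $\x_n\in M$, together with $L^{(n)}\in\mathcal L$ contained in affine $K$-hyperplanes $A^{(n)}$ having primitive normals $(\h^{(n)},h_0^{(n)})\in\K^{m+1}$ with $\h^{(n)}\neq\0$ and $|A^{(n)}|=\|\h^{(n)}\|=:Q_n$, so that at each stage $\x_n\in L^{(n)}$, $Q_n>\max(n,Q_{n-1})$, $\overline{B_n}\subset B_{n-1}$ and $r_n<2^{-n}$. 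The limit $\x=\lim_n\x_n$ then lies in $\overline M\cap W=M$.

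The inductive step is where properties (1)--(4) enter. Given $\x_{n-1}\in L^{(n-1)}$ and $B_{n-1}$, I would use property (3) applied to $L^{(n-1)}$ with threshold $T_n:=\max(n,Q_{n-1})$ to see that the set $D_n$ of points lying on some $L^{(n-1)}\cap L_i$ with $|A_i|>T_n$ is dense in $L^{(n-1)}$, and property (4) applied to $L^{(n-1)}$ with the finite forbidden collections $\mathcal F_n:=\bigl(\{L^{(1)},\dots,L^{(n-2)}\}\cup(\{Z_1,\dots,Z_n\}\cap\mathcal L)\bigr)\setminus\{L^{(n-1)}\}$ and $\mathcal F'_n:=\{Z_1,\dots,Z_n\}\cap\mathcal L'$ to see that $U_n:=L^{(n-1)}\setminus\bigcup(\mathcal F_n\cup\mathcal F'_n)$ is open and dense in $L^{(n-1)}$. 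Since a dense set meets a dense open set in a dense set, $D_n\cap U_n$ is dense in $L^{(n-1)}$, so I can pick $\x_n\in D_n\cap U_n$ inside $B(\x_{n-1},r_{n-1}/3)$; this selects $L^{(n)}\ni\x_n$ with $Q_n>T_n$, puts $\x_n$ in $L^{(n)}\subset M$, on $A^{(n-1)}$, and off every member of $\mathcal F_n\cup\mathcal F'_n$. (For $n=1$ I would instead just take, by density (1), any $L^{(1)}\in\mathcal L$ meeting $B_0$.) Then I choose $r_n$ so small that $\overline{B_n}\subset B_{n-1}$, $r_n<2^{-n}$, $B_n$ is disjoint from each (closed) $Y\in\mathcal F_n\cup\mathcal F'_n$, and crucially $CQ_{n-1}r_n\le\min\{Q_n^{-(n-1)},\ \zeta(\Phi(\h^{(n)}))\}$, where $C$ is the dimensional constant in the Lipschitz-type bound $\|\q\cdot\z+q_0\|\le C\|\q\|\,\mathrm{dist}(\z,A)$ valid for $\z$ near an affine $K$-hyperplane $A$ with normal $(\q,q_0)$. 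The point of the ordering is that the quality of approximation of $\x$ by $A^{(n-1)}$ is forced at stage $n$: because $\x_n\in A^{(n-1)}$ and $\x\in B_n$ we get $\mathrm{dist}(\x,A^{(n-1)})\le r_n$, hence $\|\h^{(n-1)}\cdot\x+h_0^{(n-1)}\|\le CQ_{n-1}r_n$; so no constraint ever has to be guessed before its relevant data ($Q_{n-1}$, $Q_n$, $\h^{(n)}$) is known.

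It then remains to verify the three conclusions for $\x$. For total irrationality: each $Z_i$ belongs to $\mathcal F_n\cup\mathcal F'_n$ for all large $n$ (the only stage at which it is excluded is the unique one, if any, with $L^{(n-1)}=Z_i$, which exists for at most one $n$ since the $Q_n$ are strictly increasing), so eventually $B_n\cap Z_i=\varnothing$ and thus $\x\notin Z_i$; by property (2), $\x$ is totally irrational. For the uniform exponent: $\|\h^{(n-1)}\cdot\x+h_0^{(n-1)}\|\le Q_n^{-(n-1)}$ while $\|\h^{(n-1)}\|=Q_{n-1}\le Q_n$, so given $v>0$ and $Q$ large, taking $n$ maximal with $Q_{n-1}\le Q$ (so $Q<Q_n$, and $n\to\infty$ as $Q\to\infty$) the pair $(\h^{(n-1)},h_0^{(n-1)})$ solves $\|\q\cdot\x+q_0\|<Q^{-v}$, $\|\q\|\le Q$ once $n-1\ge v$; hence $\hat\omega(\x)=\infty$, and in particular $\x$ is singular. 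For the irrationality-measure bound: given large $t$, take $n$ maximal with $\Phi(\h^{(n-1)})\le t$ (possible since $\|\h^{(n-1)}\|=Q_{n-1}\to\infty$ and $\Phi$ is proper); then $\eta_{\Phi,\x}(t)\le\|\h^{(n-1)}\cdot\x+h_0^{(n-1)}\|\le CQ_{n-1}r_n\le\zeta(\Phi(\h^{(n)}))\le\zeta(t)$, using that $\zeta$ is non-increasing and $t<\Phi(\h^{(n)})$. Finally, to get uncountably many such $\x$ I would run the whole construction along a binary tree: at each node, since $D_n\cap U_n$ is dense in $L^{(n-1)}$ (which contains more than one point), I can choose two distinct points and two disjoint descendant balls and continue as above; distinct branches then give distinct limit points, all of which are totally irrational singular vectors in $M$ with $\hat\omega=\infty$ and $\eta_{\Phi,\cdot}(t)\le\zeta(t)$ for all large $t$. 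The main obstacle I expect is exactly the bookkeeping in the inductive step --- simultaneously meeting the three competing demands (high-complexity hyperplane through the previous one near $\x_{n-1}$; avoidance of the finitely many forbidden sets; ball small enough to pin down the approximation by $A^{(n-1)}$ and the $\zeta$-bound) --- and the two things that make it go through are the observation that $D_n\cap U_n$ is dense (dense meets dense-open densely) and the observation that approximation by $A^{(n-1)}$ costs nothing to enforce because $\x_n$ already sits on $A^{(n-1)}$.
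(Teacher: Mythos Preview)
Your proposal is correct and follows essentially the same approach as the paper: both run the Khintchine-style nested construction from \cite{KNW}, using hypothesis (3) to find a new high-complexity $L^{(n)}$ through a point of $L^{(n-1)}$, hypothesis (4) to dodge finitely many forbidden sets, and then shrinking the ball to force the approximation and $\zeta$-bounds. The paper records the inductive requirements as its conditions (a)--(e) and then defers the construction verbatim to Theorem~1.1 of \cite{KNW}; you spell the induction out explicitly, which is fine.

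The only substantive difference is the uncountability mechanism: the paper assumes the target set $\mathcal S$ is countable, enumerates it as $\{\x_1,\x_2,\dots\}$, adds ``$\U_i$ disjoint from $\{\x_k\}$'' to condition (c), and derives a contradiction; you instead branch into a binary tree at each stage. Your branching relies on $D_n\cap U_n$ having at least two points in every small ball, which you assert without proof; it does hold, because hypotheses (3) and (4) together force every $L\in\mathcal L$ to be perfect (an isolated point of $L$ would, by (3), lie on some $L_i\ne L$, contradicting (4) with $\mathcal F=\{L_i\}$), so density of $D_n\cap U_n$ in $L^{(n-1)}$ gives infinitely many choices. Either device works; the paper's diagonal argument avoids having to check this. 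A second minor difference is that the paper enforces $\Phi(\q_i)>\Phi(\q_{i-1})$ (its condition (b)) to make the final $\eta_{\Phi,\x}\le\zeta$ estimate a one-line interval argument, whereas you only enforce $Q_n>Q_{n-1}$ and instead argue via ``$n$ maximal with $\Phi(\h^{(n-1)})\le t$''; your version is slightly more flexible and still correct since the $\h^{(n)}$ are distinct and $\Phi$ is proper.
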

  \begin{proof}
  Define the set $$\mathcal{S}=\{\x\in M\mid\,\exists\, t_0\text{ such that }\forall\,t\geq t_0,\eta_{\Phi,\x}(t)\leq\zeta(t)\text{ and }\x\text{ is totally irrational} \}.$$
  We produce elements in $\mathcal{S}$ and simultaneously prove $\mathcal{S}$ is uncountable. For this we start with assuming $\mathcal{S}$ is countably infinite, say $\mathcal{S}=\{\x_1,\x_2,\cdots\}$, and arrive at a contradiction as every $\x$ obtained with the method below essentially belongs to $\mathcal S$ but, by (c) below, is not any of the existing elements $\x_i$ in $\mathcal{S}$.
  As $M$ is locally closed, there exists an open subset $W\in K_S^m$ such that $M=\overline{M}\cap W$. Let $\U_0=W$. For the primitive lattice point $(h_0^{(i)},h_1^{(i)},\cdots,h_m^{(i)})$ determining the affine $K$-hyperplane $A_{r_i}$ containing the closed subset $L_{r_i}$ in $M$, we let $p_i\colonequals h_0^{(i)}$ and $\q_i\colonequals(h_1^{(i)},\cdots,h_m^{(i)})$. The idea now is to build a nested sequence $\{\U_i\}_{i=1}^\infty$ of bounded open subsets of $W$, and a sequence of strictly increasing indices $\{r_i\}$ corresponding to the choice of the closed subsets $L_{r_i}$ in each stage $i$. This sequence of sets and indices will satisfy the following conditions:
  \begin{itemize}
      \item[a)] $\overline{\U_i}\subset\U_{i-1}$ for all $i\geq 1$,
      \item[b)] $\Phi(\q_{i})>\Phi(\q_{i-1})$ for all $i\in\N$
      \item[c)] $\U_i$ is disjoint from $L_k\cup L'_{k}\cup\{\x_k\}$ for all $k<i$.
      \item[d)] $\U_i\cap L_{r_i}\neq\emptyset$ for all $i\in\N$.
      \item[e)] for all $i\in\N$ and $\x\in\U_i$, we have the local uniformity condition $\Vert\q_{i-1}\cdot\x +p_{i-1}\Vert<\zeta(\Phi(\q_i))$.
  \end{itemize}
  Proving the existence of this sequence of subsets $\{\U_i\}_{i=1}^\infty$  suffices as any vector $\x$ belonging to the nonempty set $M\cap\bigcap_i\U_i$ is seen to be a  totally irrational singular vector.  With the above notation in place, the construction of the sets $\{\mathcal U_i\}$ and the proof of $\mathcal S$ being uncountable follows, almost verbatim, the proof for Theorem 1.1 in \cite{KNW}. To see $\eta_{\Phi,\x}(t)\leq\zeta(t)$, 
  condition (b) along with $\zeta$ being non-increasing yields the irrationality measure function $\eta_{\Phi,\x}$ to be non-increasing. Let $t_0=\Phi(\q_1)$ then, for any $t\geq t_0$ there exist an $r\in\N$ such that, $t\in[\Phi(\q_r),\Phi(\q_{r+1})]$. Using (e), we get $$\eta_{\Phi,\x}(t)\leq\eta_{\Phi,\x}(\Phi(\q_r))\leq\min\limits_{p\in\K} \Vert \q_r\cdot\x+p\Vert\leq\Vert \q_r\cdot\x+p_r\Vert<\zeta(\Phi(\q_{r+1}))\leq \zeta(t).$$
  
  \end{proof}
  
   In the rest of the article we let $M=\prod_{\sigma\in S} M_\sigma\subset K_S^m$ be a real analytic submanifold of $K_S^m$ (refer Section \S \ref{DAN}). When $\dim(M_\sigma)\geq 2$, in some cases we can reduce the problem of finding totally irrational singular vectors in higher dimensions to surfaces, making it easier to tackle. We have the following proposition towards this end.
   
  \begin{proposition}
  \label{SD2}
  Let $K$ be a totally real number field of degree $d$ over $\Q$. Suppose $M=\prod_{\sigma\in S} M_\sigma$ is a connected real analytic submanifold of $K_S^m$ such that each $M_\sigma$ is not contained in any affine $K$-hyperplane in $K_\sigma^m$ and $\dim(M_\sigma)\geq 2$. Then $M$ contains a bounded real analytic submanifold $N=\prod_{\sigma\in S} N_\sigma$ such that, each $N_\sigma$ is not contained in any affine $K$-hyperplane of $K_\sigma^m$ and $\dim(N_\sigma)=2$ for all $\sigma$.
  \end{proposition}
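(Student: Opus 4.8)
The plan is to argue one archimedean place at a time. Fix $\sigma\in S$ and set $k=\dim M_\sigma\ge 2$. First I would record that the affine $K$-hyperplanes of $K_\sigma^m$ form a \emph{countable} family, say $\{W_1,W_2,\dots\}$, since each is the zero locus of an affine polynomial with coefficients in the number field $K$ (pushed through $\sigma$), and there are only countably many such polynomials up to scalars. For each $i$ let $\ell_i$ be an affine function cutting out $W_i$. Since $M_\sigma$ is connected and real analytic and $M_\sigma\not\subseteq W_i$, the restriction $\ell_i|_{M_\sigma}$ is a real analytic function that does not vanish identically; by the identity principle for real analytic functions on a connected manifold it cannot vanish on any nonempty open subset, so $M_\sigma\cap W_i$ is closed with empty interior, i.e. nowhere dense in $M_\sigma$. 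As $M_\sigma$ is locally compact Hausdorff, it is a Baire space, so $M_\sigma\setminus\bigcup_i W_i$ is dense and in particular nonempty; I would fix a point $p_\sigma$ in it.

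Next comes the localization and slicing step, which produces $N_\sigma$. Choose a real analytic parametrization $\psi\colon U\to M_\sigma$ of a neighbourhood of $p_\sigma$, where $U\subseteq\R^k$ is open, $\psi$ is a real analytic embedding, and $\psi(x_0)=p_\sigma$ for some $x_0\in U$. Pick any affine $2$-plane $P\subseteq\R^k$ through $x_0$ (if $k=2$, take $P=\R^k$) together with a connected bounded open neighbourhood $B$ of $x_0$ in $P$ whose closure is compact and contained in $U$. Set $N_\sigma:=\psi(B)$. Because $B$ is a connected open subset of an affine plane and $\psi$ is a real analytic embedding, $N_\sigma$ is a connected $2$-dimensional real analytic submanifold of $K_\sigma^m$ contained in $M_\sigma$; it is bounded since $\psi(\overline{B})$ is compact. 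Finally $p_\sigma=\psi(x_0)\in N_\sigma$ while $p_\sigma\notin W_i$ for every $i$, so $N_\sigma$ is not contained in any affine $K$-hyperplane of $K_\sigma^m$. Taking $N:=\prod_{\sigma\in S}N_\sigma$ then gives the desired submanifold: it lies in $M$, it is bounded (and connected), each factor $N_\sigma$ has dimension $2$, and no factor is contained in an affine $K$-hyperplane.

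The heart of the matter — and essentially the only non-formal point — is the identity-principle step: that $M_\sigma\not\subseteq W_i$ forces $M_\sigma\cap W_i$ to be nowhere dense. This is exactly where the hypotheses that $M_\sigma$ is connected and real analytic are used, and it is what powers the Baire category selection of $p_\sigma$. Everything else is routine: countability of the hyperplane family, the existence of real analytic charts on a real analytic submanifold, and the observation that a small piece of an affine $2$-plane pushes forward under a real analytic embedding to a bounded $2$-dimensional real analytic submanifold. I would also highlight that once $p_\sigma$ avoids every $W_i$, \emph{any} $2$-plane through it works, so no transversality or genericity argument over a Grassmannian of planes is needed.
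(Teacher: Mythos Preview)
Your argument is correct, and it is genuinely different from the paper's proof. Both use the same three ingredients --- countability of the family of affine $K$-hyperplanes, the identity principle for real analytic functions on a connected manifold, and a Baire category argument --- but they are organized differently. The paper inducts on $\dim M=\sum_\sigma k_\sigma$: at each step it fixes one place $\tau$ with $k_\tau>2$, considers the one-parameter family of slices $N_\tau=f_\tau(\cdot,\alpha)$ for $\alpha\in(0,1)$, and uses Baire category (applied to the cube $I_{k_\tau}$ and the countable family of preimages $f_\tau^{-1}(M_\tau\cap H_{\h,\tau})$) to find an $\alpha$ for which the slice is not trapped in any $H_{\h,\tau}$; then it repeats. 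You instead apply Baire category once, at the level of points, to produce a single $p_\sigma\in M_\sigma$ lying on no affine $K$-hyperplane, after which \emph{any} $2$-dimensional slice through $p_\sigma$ visibly avoids being contained in every $W_i$. Your route is shorter and avoids the induction entirely; the observation that the existence of a $K$-generic point $p_\sigma$ trivializes the slicing step is a nice simplification. The paper's inductive organization, on the other hand, is closer in spirit to the analogous argument over $\R$ in \cite{KNW} and makes explicit that one can reduce dimension by one at a time, which could be useful if one wanted a more constructive handle on $N_\sigma$.
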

  
  \begin{proof}
  Let $k_\sigma\colonequals\dim(M_\sigma)$ so that $k=\sum_{\sigma\in S}k_\sigma$ is the dimension of $M$. We prove by inducting on $\dim(M)=k$. If $k_\sigma=2$ for all $\sigma$, the assertion is trivially seen to be true. Assume that the result holds true for all connected real analytic submanifolds $N$ of $K_S^m$ with $2d<\dim(N)<k$. Then, the proof for a connected real analytic manifold $M$ with $\dim(M)=k$ is as follows.
  For each $\sigma$ we identify $K_\sigma^m$ with its image under the isomorphism $K_\sigma^m\simeq\R^m$. We may then assume that each $M_\sigma$ is the image of the open $k_\sigma$-dimensional cube $I_{k_\sigma}\colonequals(0,1)^{k_\sigma}$ under a real analytic immersion $f_\sigma:I_{k_\sigma}\to K_\sigma^m$. Then, $M=\prod_{\sigma\in S} f_\sigma(I_{k_\sigma})$.
  
  Now, consider the connected real analytic submanifold $N$ of $M$ defined as follows. Pick a $\tau\in S$. We then define $N=\prod_{\sigma\in S,\,\sigma\neq\tau}N_\sigma\times N_\tau\colonequals\prod_{\sigma\in S,\,\sigma\neq\tau}(g_\alpha)_\sigma(I_{k_\sigma})\times (g_\alpha)_\tau(I_{k_\tau-1})$ where, the real analytic immersion $g_\alpha$ is given by 
  \begin{align*}
      N_\sigma=(g_{\alpha})_\sigma(I_{k_\sigma})=\,&  f_\sigma(I_{k_\sigma})=M_\sigma, \text{ for all } \sigma\in S, \sigma\neq\tau, \\
      N_\tau=(g_{\alpha})_\tau(x_1,\cdots,x_{k_\tau-1})=\,&  f_\tau(x_1,\cdots,x_{k_\tau-1},\alpha), \mbox{ for some } \alpha\in(0,1).
  \end{align*}
  As $M_\tau$ is not contained in any affine $K$-hyperplane in $K_\tau^m$, it is not contained in $H_\tau$ for any $A=\prod_{\sigma\in S}H_\sigma$ in $\mathcal{A}$. Note that as $A$ varies in $\A$, $H_\tau$ varies over all affine $K$-hyperplanes in $K_\tau^m$. We have the following

  \noindent
  \textit{Claim:} There exists of an $\alpha\in(0,1)$ such that $N_\tau=(g_\alpha)_\tau(I_{k_\tau-1})$ is not contained in $H_\tau$ for any $A=\prod_{\sigma\in S} H_\sigma\in \mathcal A$. 
  
  If the claim holds true, the submanifold $N$ of $K_S^m$ is connected, real analytic and such that $N_\sigma$ is not contained in any affine $K$-hyperplane in $K_\sigma^m$ for all $\sigma$. As $\dim(N)<k$, the induction hypothesis now completes the proof.
  We now establish the claim. Every $A\in\mathcal{A}$ is of the form $A_\h=\prod_{\sigma\in S} H_{\h,\sigma}$ for some primitive lattice point $\h\in\K^{m+1}$ (ref. \ref{MOK}). 
  Let $P=\{\h\in\K^{m+1}\mid\, \h\mbox{ is primitive} \}$. If possible let there exist no $\alpha\in(0,1)$ such that $N_\tau=(g_\alpha)_\tau(I_{k_\tau-1})$ is not contained in any affine $K$-hyperplane in $K_\tau^m$. Thus, for each $\alpha$, there exists an $h\in P$ satisfying $(g_\alpha)_\tau(I_{k_\tau-1})\subset H_{\h,\tau}$. As $\bigcup_{\alpha\in(0,1)}(g_\alpha)_\tau(I_{k_\tau-1})=M_\tau$, taking union over all $\h\in P$ gives
  $$I_{k_\tau}=\bigcup_{h\in P}f_\tau^{-1}(M_\tau\cap H_{\h,\tau}).$$
  Now, as in Lemma 3.5 in \cite{KNW}, given that the right hand side is a countable union, using Baire category theorem we can conclude the existence of an $\h_0\in P$ with the property that $M_\tau\subset H_{\h_0,\tau}$. This gives a contradiction. Hence, the claim .

  \end{proof}
  
  \begin{remark}
  \label{hyperplane}
  In Proposition 3.4 of \cite{KNW}, the hypothesis regards $M$ as a connected real analytic submanifold of $\R^m$ not contained in any proper affine rational subspace. The fact that any proper affine rational subspace is contained in an affine rational hyperplane, is observed in the proof therein. Similar fact holds true in the case of affine $K$-subspaces of $K_S^m$.
  Hence, Proposition \ref{SD2} holds true if affine $K$-hyperplane is replaced by affine $K$-subspace of $K_S^m$.
  \end{remark}
  
  \begin{proposition}
  \label{OBC}
  Let $M=\prod_{\sigma\in S}M_\sigma$ be a bounded connected real analytic submanifold of $K_S^m$ such that each $M_\sigma$ is not contained in any affine $K$-hyperplane in $K_\sigma^m$. Let $\dim(M_\sigma)=2$ for all $\sigma\in S$. Let $A\in\A$ be given by $A=\prod_{\sigma\in S}H_\sigma$. If $F$ is the collection of points $\x\in M\cap A$ such that, for some  $\sigma\in S$, there does not exist a neighbourhood $U_\sigma$ of $\x^\sigma$ with the property that $M_\sigma\cap H_\sigma\cap U_\sigma$ is a real analytic curve then, the connected components of $M\cap A\cap F^c$ are finitely many with their $\sigma$-components being real analytic curves.
  \end{proposition}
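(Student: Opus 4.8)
The plan is to reduce the statement, place by place, to a finiteness result about connected components of real-analytic varieties, and then use compactness of $M$ together with a dimension count to conclude. First I would observe that since $M = \prod_{\sigma} M_\sigma$ and $A = \prod_\sigma H_\sigma$, we have $M \cap A = \prod_\sigma (M_\sigma \cap H_\sigma)$, and likewise $F^c$ decomposes componentwise in the sense that $\mathbf{x} \in F^c$ iff for \emph{every} $\sigma$ there is a neighbourhood $U_\sigma$ of $x^\sigma$ with $M_\sigma \cap H_\sigma \cap U_\sigma$ a real-analytic curve. So it suffices to prove the corresponding one-place statement: for a bounded connected $2$-dimensional real-analytic $M_\sigma \subset K_\sigma^m \cong \R^m$ not contained in $H_\sigma$, the set $G_\sigma$ of points of $M_\sigma \cap H_\sigma$ admitting such a curve-neighbourhood has finitely many connected components, each a real-analytic curve, and then take products. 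The reduction itself needs care because $F$ is defined with an "exists $\sigma$" quantifier, so $M \cap A \cap F^c = \prod_\sigma (M_\sigma \cap H_\sigma)^{\mathrm{reg}}$ where $(M_\sigma \cap H_\sigma)^{\mathrm{reg}} := G_\sigma$ is the "good locus" in place $\sigma$; once each $G_\sigma$ has finitely many connected-curve components, the product does too, with $\sigma$-components being those curves.

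Next I would analyse $G_\sigma$ itself. Writing $M_\sigma$ locally as the image of a real-analytic immersion $f_\sigma : I_2 \to \R^m$ and $H_\sigma$ as the zero set of a single affine polynomial $\ell_\sigma$, the intersection $M_\sigma \cap H_\sigma$ pulls back to the zero set $Z := \{ \mathbf{t} \in I_2 : (\ell_\sigma \circ f_\sigma)(\mathbf{t}) = 0\}$ of a single real-analytic function $\varphi := \ell_\sigma \circ f_\sigma$ of two variables. Since $M_\sigma$ is \emph{not} contained in $H_\sigma$, $\varphi$ is not identically zero (using connectedness of $M_\sigma$ and the identity principle, after covering $M_\sigma$ by finitely many such coordinate charts). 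A point $\mathbf t \in Z$ lies over the good locus $G_\sigma$ exactly when $Z$ is, near $\mathbf t$, a real-analytic curve; by the local structure of real-analytic curves in the plane (e.g.\ Puiseux / Łojasiewicz, or just that a nonzero analytic function of two variables has a zero set that is locally a finite union of analytic arcs away from isolated bad points), the complement $Z \setminus G_\sigma$ — the points where the zero set is not a single smooth arc, i.e.\ isolated zeros, crossings, cusps, or higher-multiplicity points — is a \emph{discrete} subset of $I_2$. Here I would invoke the standard fact that the zero set of a real-analytic function, or its singular locus, is itself real-analytic of strictly lower dimension; in two variables this forces $Z \setminus G_\sigma$ to be discrete in each chart.

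The main obstacle is the global finiteness: making discreteness-in-each-chart into "finitely many connected components, globally," which is where boundedness of $M$ is essential. Here I would use that $\overline{M_\sigma}$ is compact, cover it by finitely many coordinate charts on which the above local analysis applies, and in each chart use that the bad set $Z \setminus G_\sigma$ is discrete and $Z$ is contained in a compact region — hence the bad set is finite in each chart — so $Z \setminus G_\sigma$ is finite globally. Removing finitely many points from the real-analytic curve $G_\sigma$ (or rather, noting $G_\sigma$ is an open subset of the $1$-dimensional variety $Z$ whose complement is finite) leaves finitely many connected components, each an embedded real-analytic curve (one must check that $G_\sigma$, being the locus where $Z$ is locally a single smooth arc, is genuinely a $1$-manifold — this is built into its definition). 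Then $M \cap A \cap F^c = \prod_{\sigma} G_\sigma$ has connected components equal to products of these curves, and there are finitely many of them because each factor has finitely many; a product of real-analytic curves is what the statement calls "their $\sigma$-components being real analytic curves." I expect the delicate point to be the precise bookkeeping at the chart overlaps (ensuring a component is not artificially split or merged) and verifying that "locally a single real-analytic curve" is an open condition on $Z$ whose failure locus is closed and lower-dimensional — this is exactly the input one should cite from the real-analytic stratification literature rather than reprove.
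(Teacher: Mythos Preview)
Your approach is correct and matches the paper's: both reduce via the product decomposition $M\cap A\cap F^c=\prod_{\sigma}(M_\sigma\cap H_\sigma\setminus\tilde F_\sigma)$ and then argue place by place that the bad set $\tilde F_\sigma$ is finite, so the complement has finitely many real-analytic curve components. The only difference is that the paper simply cites Proposition~3.2 of \cite{KNW} for the one-place finiteness, whereas you sketch that argument directly (zero set of a nonzero real-analytic function of two variables, discrete singular locus, compactness); your sketch is sound and is essentially what that reference contains.
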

  \begin{proof}
  Define for $\sigma\in S$
  $$
      \tilde{F_\sigma}\colonequals \left\{\x^\sigma\in M_\sigma\cap H_\sigma\left|\, \begin{aligned} & \nexists\text{ a neighbourhood } U_\sigma \mbox{ of }\x^\sigma \text{ such that }\\ & M_\sigma\cap H_\sigma\cap U_\sigma \mbox{ is a real analytic curve} \end{aligned}\right.\right \}.$$
 
  Then, $F$ can be written as $F=\bigcup_{\sigma\in S}\Bigl(\tilde{F_\sigma}\times\prod_{\tau\neq\sigma}M_\tau\cap H_\tau\Bigr)$. 
  Now, 
  $$
      F^c=\left\{\x\in M\cap A \mathrel{\bigg|} \begin{aligned} &\exists\text{ a neighbourhood } U=\prod\limits_{\sigma\in S} U_\sigma\mbox{ of }\x\mbox{ such that } \\ & M_\sigma\cap H_\sigma\cap U_\sigma 
      \text{ is a real analytic curve } \forall\,\sigma\in S 
      \end{aligned}\right\}.
  $$
  This implies that $\x\in F^c,\text{ if and only if } \x^\sigma\notin\tilde{F_\sigma}$ for all $\sigma\in S$. We have 
  $$M\cap A\cap F^c=\prod_{\sigma\in S}M_\sigma\cap H_\sigma\cap \tilde{F_\sigma}^c.$$
  Arguing along the lines of Proposition 3.2 of \cite{KNW}, we can conclude that for each $\sigma\in S$, $\tilde{F}_\sigma$ is a finite set and hence $M_\sigma\cap H_\sigma\cap \tilde{F_\sigma}^c$ has only finitely many connected components, each of which is a real analytic curve. 
  \end{proof}
   
  \begin{remark}
  Retaining the terminology from \cite{KW}, a connected component $\gamma$ of $M\cap A\cap F^c,\ A\in\A$, in the above Proposition will be called a \textit{basic connected component} of $M\cap A$.
  Note that $\gamma=\prod_{\sigma\in S}\gamma_\sigma$ where, each $\gamma_\sigma$ is a connected component of $M_\sigma\cap H_\sigma\cap \tilde{F}_\sigma^c$, which is a real analytic curve by the Proposition 3.2 of \cite{KNW}. As $\tilde{F}_\sigma$ is finite for each $\sigma$, the collection $\{\gamma_\sigma\}$ forms a dense subset of $M_\sigma\cap H_\sigma$ and hence, for each $\sigma$, $\{\overline{\gamma_\sigma}\}\supset M_\sigma\cap H_\sigma$. This implies that the set $\{\overline{\gamma}=\prod_{\sigma\in S}\overline{\gamma_\sigma}\}$ comprising the closures of all the basic connected components of $M\cap A,\,A\in\A$ will contain all points of $\{M\cap A\mid\,A\in\A\}$.
  \end{remark}
  
  We recall a few notation before the proof of Theorem \ref{manifold}. For each $\sigma$, $\{\e_1^\sigma,\cdots,\e_m^\sigma\}$ will denote the standard basis of the vector space $K_\sigma^m$ over $K_\sigma$. The bold symbols $\{\e_i\}_{i=1}^m$ will denote the basis of the $K_S$-module, $K_S^m$. Thus, $\e_i=(\e_i^{\sigma})_{\sigma\in S}$. An affine subspace $T=\prod_{\sigma\in S}T_\sigma$ in $K_S^m$ is normal to, say  $\e_1$, will mean that $T_\sigma$ is normal to $\e_1^\sigma$ for all $\sigma\in S$.
  
  \begin{proof}[Proof of Theorem \ref{manifold}:]
  With Remark \ref{hyperplane} in place, we prove the theorem considering affine $K$-hyperplanes of $K_S^m$ in the place of affine $K$-subspaces of $K_S^m$.
  By Proposition \ref{SD2}, we may replace $M$ with a bounded connected submanifold of $K_S^m$ such that for each $\sigma$, $M_\sigma$ is a real analytic surface not contained in any affine $K$-hyperplane in $K_\sigma^m$. For $\x\in M$, the tangent space $T_\x M$ is the product space $\prod_{\sigma\in S}T_{\x^\sigma}M_\sigma$. Let $\tilde{\A}\colonequals\{A_i=\prod_{\sigma\in S}H^{(i)}_\sigma\in\A\mid\, M\cap A_i\neq\emptyset\}$ be the countable subcollection of $\A$ comprising all the affine $K$-hyperplanes of $K_S^m$ that are normal to either $\e_1$ or to $\e_2$. As each $M_\sigma$ is assumed to be not contained in any affine $K$-hyperplane inside $K_\sigma^m$, there exists an $\x\in M$ for which, for all $\sigma\in S$, $T_{\x_\sigma}M_\sigma$ is not normal to either $\e_1^\sigma$ or to $\e_2^\sigma$. For each $\sigma$, we may thus further replace $M_\sigma$ by a smaller connected open subset, possibly an open connected neighbourhood of $\x$ in $M$, so as to ensure that for every $\y\in M$, $T_{\y} M$ is not normal to either $\e_1$ or $\e_2$. Now, for each $\sigma$, $M_\sigma$ can be seen as the graph of a smooth function over its projection to the vector subspace $V_\sigma$ of $K_\sigma^m$ spanned by $\{\e_1^\sigma,\e_2^\sigma\}$. We will choose $M_\sigma$ small enough so that this projection is a convex set for all $\sigma$. 
  Modifying $M$ as above without loss of generality, for any $A_i=\prod_{\sigma\in S}H^{(i)}_\sigma\in\tilde{\A}$ and $\x\in M\cap A$, $T_{\x^\sigma}M_\sigma$ intersects $H^{(i)}_\sigma$ transversally for each $\sigma$, \textit{i.e.}, $T_{\x^\sigma}M_\sigma\cap H^{(i)}_\sigma$ will be an affine one dimensional subspace of $K_\sigma^m$. Using implicit function theorem, we conclude that $M_\sigma\cap H^{(i)}_\sigma$ is a real analytic curve. 
  
  Our aim is to evoke Theorem \ref{singular} to show the existence of totally irrational singular vectors on $M$. For this we first define the collection $\mathcal L\colonequals\{L_i\}$ of closed subsets of $M$. Let $L_i=\prod_{\sigma\in S} L_{i,\sigma}$ where, $L_{i,\sigma}=M_\sigma\cap H^{(i)}_\sigma$ for some $A_i\in\tilde{\A}$. Clearly, each $L_i$ is a product of real analytic curves $L_{i,\sigma}$. Further, each $L_{i,\sigma}$ is connected as the projection of $M_\sigma$ on $V_\sigma$ is chosen to be convex for all $\sigma.$ Thus, $L_i$ is connected for all $i$.
  
  Now, we define the collection $\mathcal L'\colonequals\{L_j'\}$ where $L'_j=\prod_{\sigma\in S} L'_{j,\sigma}$. Each $L'_j$ will be the closure $\overline{\gamma}$ of a basic connected component of a nonempty intersection $M\cap A$, $A\in\A$, such that the $\sigma$-components $L'_{j,\sigma}=\overline{\gamma_\sigma}$ are not contained in any $L_{i,\sigma}$. By Proposition \ref{OBC}, for each $A\in\mathcal{A}$, the basic connected components of $M\cap A$ are finite. Thus, $\{\mathcal{L}'\}$ is a countable collection. It follows from the discussion in the remark above, that $\mathcal L'$ will contain all points of $\{M\cap A\mid\,A\in\A\setminus\tilde{\A}\}$. Hence, $\mathcal{L\cup L'}$ will contain all points of $M$ that are not totally irrational. This establishes (2).
  
  Denote by $l_{i,\sigma}$ the one dimensional affine $K$-subspace in $V_\sigma$ that is the projection of $L_{i,\sigma}$. Then, $\{l_{i,\sigma}\}$, as $L_i$ varies in $\mathcal L$, forms a dense collection of vertical and a dense collection of horizontal lines in $V_\sigma$. Let $V=\prod_{\sigma\in S} V_\sigma$. Clearly, $V$ is a $K_S$-subspace of $K_S^m$ of dimension 2 generated by $\e_1$ and $\e_2$. Thus, $\mathfrak{L}_1\colonequals\{l_i\mid\ A_i\text{ normal to } \e_1\}$ forms a collection of $1$ dimensional affine subspaces of $V$ normal to $\e_1$ and indexed by elements of $K$. Similarly, $\mathfrak{L}_2\colonequals\{l_i\mid\ A_i\text{ normal to } \e_2\}$ is a dense collection of 1 dimensional subspaces in $V$ normal to $\e_2$ and indexed by the elements of $K$. The collections $\mathfrak{L}_1,\text{ and }\mathfrak{L}_2$ are dense in $V$ as $K$ is dense in $K_S$ by weak approximation. Given $x\in K_S$, weak approximation gives a sequence $p_i/q_i\in K$, such that $\underset{i\to\infty}{\lim}p_i/q_i=x$. Thus, all points $\x\in M$ with $T_\x M$ not normal to $\e_1$ and $x_1\in K_S$ being the $\e_1$ coordinate in the projection $V$, can be approximated by a sequence of points $\{\x_i\}\subset L_i$, where $\x_i$ has $p_i/q_i$ as the $\e_1$ coordinate in $V$, lying on a hyperplane $l_i\in\mathfrak{L}_1$ and $\{p_i/q_i\}$ approximate $x_1\in K_S$ via weak approximation. One can argue similarly to approximate a point $\x\in M$ having projection $x_2\in K_S$ lying in the span if $\e_2$ in $V$. This proves (1), the density of $\mathcal L$ in $M$.
  
  To prove (3), note that for $\tilde{\x}=(x_1,x_2)\in l_i$, if $p_i/q_i\rightarrow x_2$ then, we can get a sequence of points $\tilde{\x_i}=(x_1,p_i/q_i)$ lying on $l_i\in\mathfrak{L}_2$. Note that $\vert A_j\vert=\Vert q_j\Vert$ tends to $\infty$ as $\tilde{\x_i}$ tends to $\tilde{\x}$. Thus, for any $T>0$, and $\x\in L_i$ with $\tilde{\x}$ as its projection in $V$, we can find affine $K$-hyperplanes $A_j$ with $\vert A_j\vert>T$, giving a sequence of points lying on $L_i\cap L_j$ converging to $\x$.
  
  For a given $L_i\in\mathcal L$, each $L_{i,\sigma}$ is a real analytic curve. Hence, for $j\neq i$, $L_{i,\sigma}\cap L_{j,\sigma}$ is either empty or consists of a single point. If $\mathcal F=\{L_k\}_{k=1}^n\subset\mathcal L$, with $L_i\neq L_k\forall\ k$ then, the above discussion implies that for each $\sigma$, $L_{i,\sigma}\setminus(\bigcup_k L_{k,\sigma})$ is dense in $L_{i,\sigma}$. Thus, $\prod_{\sigma\in S}\Bigl(L_{i,\sigma}\setminus(\bigcup_k L_{k,\sigma})\Bigr)$ is dense in $L_i$. Now,
  
  \begin{equation}
  \label{eqnL}
  \prod_{\sigma\in S}\Bigl(L_{i,\sigma}\setminus\bigcup_k L_{k,\sigma}\Bigr)\subset L_i\setminus\prod_{\sigma\in S}\Bigl(\bigcup_kL_{k,\sigma}\Bigr)\subset L_i\setminus\bigcup_k L_k    
  \end{equation}
  
  This implies $L_i\setminus(\bigcup_k L_k)$ is dense in $L_i$. Following the arguments used to establish condition (c) in the proof of Theorem 1.7 in \cite{KNW}, we can see that each $L'_{k',\sigma}\cap L_{i,\sigma}$ has no nonempty interior relative to the $\sigma$-adic topology on $L_{i,\sigma}$. Thus, $L_{i,\sigma}\setminus(\bigcup_{k'}L'_{k',\sigma})$ is dense in $L_{i,\sigma}$ for each $\sigma$. Hence using \ref{eqnL} with $L'_{k'}$ in place of $L_k$, we may conclude that $L_i\setminus(\bigcup_{k'}L'_{k'})$ is dense in $L_i$. This proves condition (4) of Theorem \ref{singular}.
  
  To see $\hat{\omega}(\x)=\infty$, notice that, in light of the definition of the irrationality measure function $\eta_{\Phi,\x}$, the uniform exponent of $\x\in K_S^m$ defined in the introduction can be interpreted as the following for $\Phi=\Vert\cdot\Vert$
   $$\hat{\omega}(\x)=\sup \left\{\nu:\limsup\limits_{t\rightarrow\infty}{t^\nu\eta_{\,\Vert\cdot\Vert,\x}(t)}<\infty\right\}.$$
   Considering $\zeta(t)= e^{t}/t^\nu$ in the inequation $\eta_{\,\Vert\cdot\Vert,\x}(t)\leq\zeta(t)$ of Theorem \ref{singular} now yields the desired conclusion.

  \end{proof}

\bibliographystyle{abbrv}
\bibliography{mybibfiles}

\end{document}